\numberwithin{equation}{section}
\DeclareMathOperator{\E}{\mathbb{E}}
\DeclareMathOperator*{\diag}{diag}
\DeclareMathOperator*{\Span}{span}
\DeclareMathOperator*{\tr}{tr}
\DeclareMathOperator*{\Det}{det}
\DeclareMathOperator*{\conv}{conv}
\def \C {\mathbb{C}}
\def \N {\mathbb{N}}
\def \P {\mathbb{P}}
\def \R {\mathbb{R}}
\def \K {\mathbb{K}}
\def \e {\varepsilon}
\def \d {\delta}
\def \l {\lambda}
\def \HS {\mathrm{HS}}
\def \tr {\mathrm{tr}}
\def \vol {{\rm vol}}
\def \Id {{\rm Id}}
\def \etc {,\ldots,}
\def \ii {{\mathbf i}}
\newcommand{\norm}[1]{\left \| #1 \right \|}
\newcommand{\pr}[2]{\left \langle {#1} , {#2} \right \rangle}
\newtheorem{theorem}{Theorem}[section]
\newtheorem{proposition}[theorem]{Proposition}
\newtheorem{corollary}[theorem]{Corollary}
\newtheorem{lemma}[theorem]{Lemma}
\theoremstyle{remark}
\newtheorem{remark}[theorem]{Remark}
\begin{document}

\title[On the volume of non-central sections of a cube]{On the volume of non-central sections \\
of a cube}

\author{Hermann K\"onig}
\address[Hermann K\"onig]{Mathematisches Seminar\\
Universit\"at Kiel\\
24098 Kiel, Germany
}
\email{hkoenig@math.uni-kiel.de}

\author[Mark Rudelson]{ Mark Rudelson$^*$}
\address[Mark Rudelson]{
Department of Mathematics\\
University of Michigan\\
Ann Arbor, MI 48109-1043
}
\email{rudelson@umich.edu}

\thanks{
  $^*$Research was supported in part by NSF DMS grant 1807316.}

\keywords{Volume, non-central section, hypercube}
\subjclass[2000]{Primary: 52A38, 52A40. Secondary: 52A20}

\begin{abstract}
Let $Q_n$ be the cube of side length one centered at the origin in $\R^n$, and let $F$  be an affine
$(n-d)$-dimensional subspace of $\R^n$ having distance to the origin less than or equal to $\frac 1 2$, where $0<d<n$.
We show that the $(n-d)$-dimensional volume of the section $Q_n \cap F$ is bounded below by a value
$c(d)$ depending only on the codimension $d$ but not on the ambient dimension $n$ or a particular subspace $F$.
In the case of hyperplanes, $d=1$, we show that $c(1) = \frac{1}{17}$ is a possible choice. We also consider a complex
analogue of this problem for a hyperplane section of the polydisc.
\end{abstract}

\maketitle

\section{Introduction and main results}

The hyperplane conjecture, a.k.a. slicing problem and the Busemann-Petty problem gave a new impetus to study the volume of sections of convex bodies by linear subspaces and to estimate these quantities. The Busemann-Petty problem has been solved, and the reader may consult Koldobsky's book \cite{K}. In the case of the slicing problem only partial results are known, cf. Bourgain \cite{Bo}, Milman and Pajor \cite{MiP}, Klartag \cite{Kl}. For a comprehensive description of results related to the slicing problem see the book \cite{BGVV}. The study of sections of convex bodies is a very active area with applications in functional analysis, probability and computer science. \\

It is difficult to find the volume of maximal sections or minimal central sections of specific convex bodies by hyperplanes or, more generally, by $k$-codimensional subspaces. This is true even for classical bodies like the $l_p^n$-balls, $0 < p < \infty$, or the regular $n$-simplex. For the $n$-cube, the maximal hyperplane section was found by Ball \cite{B} in a celebrated paper in 1986. He extended his result to $k$-codimensional sections of the $n$-cube in Ball \cite{B1}. The best lower estimate for central cubic sections was known before, cf. Hadwiger \cite{Ha}, Hensley \cite{He} and Vaaler \cite{Va}. For central slabs of the cube of small width, optimal lower estimates were given by Barthe and Koldobsky \cite{BK}. The case of the cube is important for many problems in the area where it is a conjectured extremal case or provides a counterexample. Ball's result immediately provided a counterexample to the Busemann-Petty problem in high dimensions. Further, the cube has maximal volume ratio among all symmetric convex bodies, as shown by Ball \cite{B2}. \\

Concerning the $n$-simplex, Webb \cite{We} determined the maximal section through the centroid, using techniques of Ball. 
In the case of the $l_p^n$-balls, $0<p<\infty$, the minimal $k$-codimensional central sections for $p \ge 2$ and the maximal $k$-codimensional sections for $p \le 2$ were established by Meyer and Pajor \cite{MeP}. The minimal central hyperplane section for $0<p \le2$ is the one perpendicular to the main diagonal, as shown by Koldobsky \cite{K1}. The case of the maximal hyperplane section for $p \ge 2$ is open. In this case, the perpendicular direction of the maximal hyperplane section of $l_p^n$ has to depend both on $p$ and $n$, as shown by Oleszkiewicz \cite{O1}. \\

The results mentioned so far concern central sections through the origin in the case of symmetric convex bodies or through the centroid for general convex bodies. For non-central sections, not too many results are known. Moody, Stone, Zach and Zvavitch \cite{MSZZ} showed that the maximal hyperplane section of the $n$-cube of unit volume at almost maximal possible distance from the origin, namely between $\sqrt{n-1}/2$ and $\sqrt n /2$, is the one perpendicular to the main diagonal. They also solved the case of non-central line sections of the cube. For small dimensions, $n=2, 3$, the extremal hyperplane sections for all possible distances from the origin were calculated in K\"onig and Koldobsky \cite{KK1}.\\

In this paper, we establish non-trivial lower estimates for non-central sections of the $n$-cube by $k$-codimensional sections in the situation where the distance from the origin guarantees a non-void intersection. We also consider the complex case of the polydisc in $\C^n$. To formulate our results precisely, we start with a few definitions. \\

Consider a cube  of a {\it unit volume} in the space $\K^n$, where $\K \in \{\R,\C \}$.
As mentioned, the  sections of the cube by linear subspaces are classical objects of study in convex geometry, and precise estimates of their maximal and minimal volume are known.
Namely, let $\norm{\cdot}_\infty$ and $|\cdot |$ denote the supremum and the euclidean norm on $\K^n$, respectively, where $\K \in \{\R,\C \}$. For volume calculations, we identify $\C^n$ with $\R^{2n}$ and use the volume there. Let
\[
 Q_n :=\{x\in \K^n\mid \norm{x}_\infty \le \alpha\}
\]
be the $n$-dimensional cube (polydisc) of volume 1, i.e. $\alpha=1/2$ if $\K=\R$ and $\alpha=1/\sqrt{\pi}$ if $\K=\C$.
In the real case,  for any linear subspace of $E \subset \R^n$ of dimension $n-d$,
\[
 1 \le \vol_{n-d}(Q_n \cap E) \le 2^{d/2}.
\]
The lower estimate is due to Vaaler \cite{Va}, and the upper one to Ball \cite{B1}.
In the complex case, Oleszkiewicz and Pelczy\'nski \cite{OP} proved that for codimension $1$, $1 \le \vol_{2n-2}(Q_n \cap E) \le 2$.
Less is known about the non-central sections which are the subject of the current paper.

Let us discuss the real case first.
Fix a subspace $E \subset \R^n$ and consider sections of the cube by subspaces parallel to $E$. More precisely, for a vector $v \in E^\perp$, consider a function
\[
 \Phi(E, v):  = \vol_{n-d} \big( Q_n \cap (E+v) \big).
\]
  Brunn's theorem asserts that  $\Phi$ is an even function achieving the maximal value at the origin. This, in combination with Ball's theorem, provides an upper bound for the function $\Phi$ for all $E$ and $v$.
  If $|v| > \frac{1}{2}$, then a non-trivial lower bound for this function is impossible to achieve.
  Indeed, if $E$ is orthogonal to one of the basic vectors $e_j$, and $v=t e_j$ with $t> \frac{1}{2}$, then $ Q_n \cap (E+v) = \varnothing$.
  Note that $\Phi(E, t e_j)$ is discontinuous at $t=1$. A discontinuity of this type does not occur in the corresponding case of the $l_p^n$-balls for $0<p<\infty$ since these convex bodies are strictly convex.
  Our first main result provides a non-trivial lower estimate for the volume $\Phi(E, v)$ of the cubic section for all $E$ and $v$ as long as  $|v| \le \frac{1}{2}$.
  Moreover, this estimate is independent of the ambient dimension $n$ and the space $E$.

\begin{theorem} \label{thm: section}
 For any $d \in \N$, there is $\e(d)>0$ such that for any $n>d$ and any $(n-d)$-dimensional affine subspace $F \subset \R^n$ whose distance to the origin is smaller than or equal to $1/2$,
 \[
    \vol_{n-d}(Q_n \cap F) \ge \e(d).
 \]
\end{theorem}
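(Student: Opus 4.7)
I would start by translating the statement into an estimate for a marginal density. Write $F = E + v$ with $E$ a linear $(n-d)$-dimensional subspace and $v \in E^\perp$, $|v| \le 1/2$, and let $P \colon \R^n \to E^\perp$ denote the orthogonal projection. The function
\[
\rho(w) := \vol_{n-d}(Q_n \cap (E+w)), \qquad w \in E^\perp \simeq \R^d,
\]
is the density of the push-forward of the uniform measure on $Q_n$ under $P$, so $\rho$ is an even, log-concave probability density. By Brunn's principle and Vaaler's theorem, $\|\rho\|_\infty = \rho(0) \ge 1$, and the support $P(Q_n)$ contains the Euclidean ball $B_{E^\perp}(0,1/2)$, since the inradius of $Q_n$ is $1/2$. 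The goal becomes showing $\rho(v) \ge \e(d)$ for all $|v|\le 1/2$.

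The plan is a Fourier-analytic attack. Setting $u_j := Pe_j \in \R^d$, one has the isotropy identity $\sum_{j=1}^n u_j u_j^\tran = PP^\tran = I_d$, and Fourier inversion together with the evenness of $\rho$ gives
\[
\rho(v) = \frac{1}{(2\pi)^d}\int_{\R^d} \prod_{j=1}^n \frac{\sin(\langle u_j,\xi\rangle/2)}{\langle u_j,\xi\rangle/2} \, \cos(\langle v,\xi\rangle)\, d\xi.
\]
I would split the integral at a threshold $|\xi| \le R(d)<\pi$. On the central region the factor $\cos(\langle v,\xi\rangle)$ stays bounded below by $\cos(R(d)/2)>0$, and the elementary estimate $\sin(t)/t \ge 1 - t^2/6$ together with isotropy yields a Gaussian lower bound $\prod_j \sin(\langle u_j,\xi\rangle/2)/(\langle u_j,\xi\rangle/2) \ge \exp(-c|\xi|^2)$; these two bounds produce a strictly positive central contribution depending only on $d$.

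The hardest step is to control the tail $|\xi|>R(d)$, where the integrand oscillates and may contribute negatively. The natural approach is an $L^p$-estimate for the sinc product in the spirit of Ball's proof of his upper bound, using isotropy to gain integrability. The real sticking point is that when some $|u_j|$ is close to $1$ (i.e., $e_j$ is nearly in $E^\perp$), the corresponding factor decays only along a one-dimensional subspace of $\R^d$ and the product is not integrable with a bound depending solely on $d$. To deal with this I would split $[n]$ into the heavy set $J = \{j : |u_j|^2 \ge \d\}$, which by isotropy has cardinality at most $d/\d$, and its complement; factor $\rho = \rho_J * \rho_{J^c}$ accordingly. On $J^c$ the sinc product is essentially Gaussian and the tail is easily controlled, while on $J$ the problem reduces to a cube in a bounded number of coordinates, a compact family in which continuity --- or, for $d=1$, the paper's explicit computation giving $1/17$ --- produces a positive lower bound. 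Balancing the thresholds $R(d)$ and $\d$ and combining the central estimate with the tail control yields $\rho(v) \ge \e(d)>0$ independent of $n$, $E$, and $v$.
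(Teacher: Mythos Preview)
Your setup and the idea of separating ``heavy'' projections $u_j=Pe_j$ from light ones is sound, and it is also the organizing dichotomy in the paper. But the two concrete mechanisms you propose both break down, and for the same underlying reason: the hard case is exactly when some $|u_j|$ is essentially $1$, and then the density $\rho$ is \emph{discontinuous} at the boundary of its support. First, the Fourier split cannot be salvaged there: if $e_j\in E^\perp$ then $\rho$ has a jump along $\{w:\langle w,e_j\rangle=\pm\tfrac12\}$, so $\hat\rho\notin L^1(\R^d)$ and your tail integral $\int_{|\xi|>R}|\hat\rho(\xi)|\,d\xi$ is infinite; no choice of $R(d)$ makes the central part dominate. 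Second, the convolution $\rho=\rho_J*\rho_{J^c}$ does not repair this. Nothing forces the light vectors to span $E^\perp$: if, say, $u_1=e_1$ and all remaining $u_j\perp e_1$, then $\rho_{J^c}$ is a singular measure supported on $e_1^\perp$, not a Gaussian-like density on $\R^d$, so you cannot read off a lower bound for $\rho(v)$ from a pointwise lower bound on $\rho_{J^c}$. Finally, the ``compact family plus continuity'' step is exactly where the discontinuity bites: on the bounded-dimensional parameter space the section volume fails to be lower semicontinuous at $|v|=1/2$ near coordinate directions (already for $n=2$, $d=1$ the length jumps from $1$ down to $1/2$ under an arbitrarily small tilt), so compactness alone yields no positive infimum. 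Citing the $1/17$ bound for $d=1$ does not help either, since your reduction lands in $\R^d$, not in a one-codimensional situation.

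The paper confronts this discontinuity head-on rather than trying to absorb it. For $v=Pe_1/|Pe_1|$ with $|Pe_1|$ close to $1$ it uses the product-to-sum identity for $\cos\cdot\mathrm{sinc}$ to rewrite $2\rho(\tfrac12 v)$ as a difference of two \emph{central} section volumes, and then applies Vaaler's lower bound to the first and Ball's upper bound to the second; this gives a clean positive lower bound precisely in the regime where your tail integral diverges. Log-concavity then propagates this bound to all $v$ with $\|v\|_\infty$ close to $1$. For the remaining $v$, the paper does not attempt to control $\rho(v)$ via Fourier at all: it passes to the supporting hyperplane of the superlevel set $\{\rho\ge\rho(\tfrac12v)\}$ to reduce to a half-space probability $\P(\langle\xi,u\rangle\ge\tau)$, and then runs the incompressible/compressible split on $u$ (Berry--Esseen in one case, an explicit cone-volume computation in bounded dimension in the other, using crucially that $\|u\|_\infty$ is now bounded away from $1$). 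The heavy/light split thus enters one level down, on a scalar problem, where the degeneracy you encountered does not arise.
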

As the discussion above shows, the distance $1/2$ is the maximal possible one.

The  value of the bound $\e(d)$ can be traced from the proof of Theorem \ref{thm: section}.
We believe, however, that this value is quite far from the best possible. A better bound can be obtained for the sections of codimension $1$, i.e., whenever $d=n-1$.
We will present this bound in the unified way for both real and complex scalars.

To this end, let us introduce some notation.
 Given a vector $a\in \K^n$ of length $|a|=1$ and $t\in \K$, we introduce the {\em hyperplane section} of the cube
$$S(a,t):=\{x\in \K^n\mid \norm{x}_\infty \le \alpha,\ \langle x,a\rangle=\alpha t\} = Q_n \cap H$$
where $H = \{\alpha t \cdot a\} + a^\perp$, and its volume
$$A(a,t):= A_\K(a,t) := \left\{\begin{array}{c@{\quad}l}
\vol_{n-1}(S(a,t)) \; \ , & \; \K=\R \\
\vol_{2n-2}(S(a,t)) \; , & \; \K=\C
\end{array}\right\}.$$
For $a = (a_j)_{j=1}^n \in \K^n$, let $a^*$ denote the decreasing rearrangement of the sequence $(|a_j|)_{j=1}^n$. Since the volume is invariant under coordinate permutations and sign changes (rotation of coordinate discs in the complex case), we have $A(a,t)=A(a^*,|t|)$. Therefore we will assume in the following that $a=(a_j)_{j=1}^n$, $a_j \ge 0$ and $t\ge 0$. \\

By Corollary 5 of K\"onig and Koldobsky \cite{KK3} we have that
$$ A(a,t) \le \sqrt{\frac 2 {1+t^2}} \ , \ \K=\R  \text{ \hspace{0,4cm} and \hspace{0,4cm} } A(a,t) \le \frac 2 {1+t^2} \ , \ \K=\C , $$
so that $A(a,1) \le 1$ always holds.\\

As in the general case, if the distance parameter $t$ is strictly bigger than 1, the non-central hyperplane  $H = \{\alpha t \cdot a\} + a^\perp$ might not intersect $Q_n$ and $A(a,t)$ might be 0.
Assume that $t \in [0,1]$.
Our second main result gives  explicit bounds for $A(a,t)$ which are independent of the dimension $n$ of the cube and of the direction $a$.

\begin{theorem}\label{th1}
Let $a \in \K^n$ with $|a|=1$. Then \\
$$ \frac 1 {17} < 0.06011 < A_{\R}(a,1) \le 1 \ , $$
$$ \frac 1 {27} < 0.03789 < A_{\C}(a,1) \le 1 \ . $$
\end{theorem}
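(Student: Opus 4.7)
\emph{Proof plan.} The starting point is Polya's Fourier representation. With $X_1,\dots,X_n$ i.i.d.\ uniform on $[-1/2,1/2]$ and $Y:=\sum_j a_j X_j$, the identity $A_\R(a,1)=f_Y(1/2)$ (valid whenever $a$ has at least two nonzero coordinates) gives
\[
A_\R(a,1) \;=\; \frac{2}{\pi}\int_0^\infty \prod_{j=1}^n\frac{\sin(a_j u)}{a_j u}\,\cos u\,du,
\]
with a parallel formula for $\K=\C$ in which $2J_1(a_j u)/(a_j u)$ replaces $\sin(a_j u)/(a_j u)$. Both factors of the integrand are non-negative on $[0,\pi/2]$, so the strategy is to extract a clean positive lower bound from this ``main window'' and then control the oscillatory tail $[\pi/2,\infty)$.

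The natural approach is a case split on $a_1^* := \max_j a_j$ against a threshold $a_0\in(0,1)$. In the diffuse case $a_1^*<a_0$, the elementary inequality $\sin x/x \ge 1-x^2/6$ on $[0,\pi]$ yields $\prod_j \sin(a_j u)/(a_j u)\ge 1-u^2/6$ on $[0,\sqrt 6]$, and the contribution of $[0,\pi/2]$ alone equals
\[
\frac{2}{\pi}\int_0^{\pi/2}\Big(1-\frac{u^2}{6}\Big)\cos u\,du \;=\; \frac{2}{\pi}\Big(\frac{4}{3}-\frac{\pi^2}{24}\Big)\;\approx\;0.59,
\]
well above $1/17$. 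The oscillatory tail $u>\pi/2$ is absorbed by Ball's weighted AM--GM bound $\prod_j |\sin(a_j u)/(a_j u)| \le \sum_j a_j^2\,|\sin(a_j u)/(a_j u)|^{1/a_j^2}$, which, combined with each $a_j\le a_0$ and the uniform $L^p$-boundedness of $\sin x/x$, makes the tail contribution vanish as $a_0\to 0$, uniformly in $n$.

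In the concentrated case $a_1^*\ge a_0$, I would instead integrate out the dominant coordinate to rewrite
\[
A_\R(a,1) \;=\; \frac{1}{a_1^*}\,\P\!\Big(Y'\in\Big[\tfrac{1-a_1^*}{2},\tfrac{1+a_1^*}{2}\Big]\Big), \qquad Y':=\sum_{j\ne 1^*} a_j X_j.
\]
The density of $Y'$ is symmetric and log-concave; Vaaler's theorem applied to $Y'/\sqrt{1-(a_1^*)^2}$ gives $f_{Y'}(0)\ge 1/\sqrt{1-(a_1^*)^2}$, while $\Var(Y')=(1-(a_1^*)^2)/12$. Combining Chebyshev-type tail bounds (to rule out too much mass sitting past $(1+a_1^*)/2$) with the log-concave structure (to transfer mass into the target window) yields a quantitative lower bound on the probability in terms of $a_1^*$; dividing by $a_1^*$ recovers a bound of the right order. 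The threshold $a_0$ is then chosen so that both cases clear $1/17$, which is where the explicit constant comes from. The complex case runs in parallel with $2J_1(a_j u)/(a_j u)$ in place of $\sin(a_j u)/(a_j u)$, and the weaker constant $1/27$ reflects the faster Bessel decay and the thinner complex-coordinate window.

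The main technical obstacle is the probability bound in the concentrated case when $a_1^*$ is only moderately large (just above $a_0$): the target window $[(1-a_1^*)/2,(1+a_1^*)/2]$ then sits well to the right of the mode of $Y'$, so a naive central density bound is useless. What is needed is a quantitative ``a symmetric log-concave density on a bounded interval cannot collapse on $[0,1/2]$'' statement, carefully calibrated against the diffuse-case tail; it is this balancing act that forces the specific numerical constants $1/17$ and $1/27$ in the theorem.
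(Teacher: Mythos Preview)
Your approach is genuinely different from the paper's, but as written it has two concrete gaps.

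First, the diffuse-case tail does \emph{not} vanish as $a_0\to 0$ under the bound you sketch. After Ball's weighted AM--GM and the substitution $v=a_j u$, the tail is controlled by $\frac{2}{\pi}\sum_j a_j \int_{a_j\pi/2}^\infty |\sin v/v|^{1/a_j^2}\,dv$. For small $a_j$ the integrand behaves like $\exp(-v^2/(6a_j^2))$, whose mass lives on the scale $v\asymp a_j$; starting the integral at $a_j\pi/2$ therefore removes only a fixed fraction of the total $\int_0^\infty |\sin v/v|^{1/a_j^2}\,dv\asymp a_j$, and the sum is $\asymp\sum_j a_j^2=1$, independent of $a_0$. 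With enough care one might still extract a usable numerical bound here (the CLT value $\sqrt{6/(\pi e^3)}\approx 0.31$ suggests room exists), but the tail is not $o(1)$ as you assert, and the arithmetic would be delicate.

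Second, you correctly flag the concentrated case with moderate $a_1^*$ as the crux and then leave it open. Neither Vaaler's bound on $f_{Y'}(0)$ nor a Chebyshev tail bound forces any mass of $Y'$ into the window $[(1-a_1^*)/2,(1+a_1^*)/2]$, which for $a_1^*$ just above $a_0$ sits well to the right of the mode; log-concavity alone does not prevent exponential decay there. This is essentially the original problem again, shifted by one coordinate, so no reduction has taken place.

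For contrast, the paper sidesteps the case split entirely. It rewrites
\[
A_\R(a,1)=\E\Big[\big|{\textstyle\sum_j} a_j U_j\big|^{-1}\,\one\big(|{\textstyle\sum_j} a_j U_j|\ge 1\big)\Big]
\]
with $U_j$ i.i.d.\ uniform on $S^2$ (on $S^3$ for $\K=\C$), then combines a lower bound $\P(|\sum_j a_j U_j|\ge 1)\ge 0.1268$---obtained from moment comparison sharpened by an Orlicz-space duality argument---with the subgaussian upper tail $\P(|\sum_j a_j U_j|\ge t)\le t^3 e^{3(1-t^2)/2}$. The constants $0.06011$ and $0.03789$ are outputs of this particular optimization, not of a threshold balancing of the kind you describe; your last paragraph misreads their origin.
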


\vspace{0,5cm}
Clearly, the lower bounds are not optimal.
However, they cannot be improved by more than a factor of $\simeq 5.2$ in the real case and by a factor of $\simeq 7.3$ in the complex case, see Remark \ref{rem: diagonal}.

In the rest of the paper, we prove Theorems \ref{thm: section} and \ref{th1}.
The proof of Theorem \ref{thm: section} is contained in Section \ref{sec: multidim}.
In the course of it, we represent the function $\Phi(E,v)$ as the density $f_X$ of the projection of a random vector  uniformly distributed in $Q_n$ onto the space $E^\perp$. We use both the geometric and the probabilistic definition of this function passing several times from one to another throughout the proof. If the space $E$ is almost orthogonal to a coordinate vector and $v$ is almost parallel to it, we derive the desired estimate by analyzing the characteristic function of $f_X$ and using the log-concavity of this density. The analysis of the characteristic function relies in turn on its representation as the difference of characteristic functions of some other sections of the cube. The opposite case splits into two separate subcases. If the vector $v$ is incompressible, i.e., far from any low-dimensional coordinate subspace, we prove the required bound probabilistically. If this vector is compressible, we rely on the previous analysis to reduce the bound to a similar geometric problem but in dimension depending only on $d$. The estimate in this case can be obtained directly. \\

We start preparing the ground for proving Theorem \ref{th1} in Section \ref{sec: formulas}. In this section, we use the Fourier transform to represent the volume of a hyperplane section as a certain integral over the product of $n$ euclidean spheres $S^{k-1}$ with respect to the Haar measure. Here, $k=3$ in the real case, and $k=4$ in the complex case. The estimate of these integrals requires a lower bound for the probability that $|\sum_{j=1}^n a_j U_j| \ge 1$ where $a=(a_1, \ldots, a_n) \in S^{n-1}$ and  $U_1, \ldots, U_n$ are independent random vectors uniformly distributed in $S^{k-1}$. A similar problem with $U_j$ being scalar random variables has been extensively studied because of its importance in computer science, see e.g., \cite{HK, BTNR, O, BH} and the references therein. However, the methods used there do not seem to be suitable to the vector-valued random variables. In Section \ref{sec: Orlicz}, we develop a new method based on estimates of the Laplace transform and duality of Orlicz spaces. This method may be of independent interest as it is applicable to a broader class of random vectors. The probability itself is estimated in Section \ref{sec: tails}. Finally, in Section \ref{sec: hyperplane}, we apply the toolkit created in three previous sections to complete the proof of Theorem \ref{th1}.

\vspace{0,5cm}
{\bf Acknowledgment.} The first author is grateful to S. Kwapie\'n \cite{Kw} for indicating the basic idea of the first proof of Proposition \ref{prop5}, citing ideas which go back to Burkholder \cite{Bu}, and for providing the reference to Veraar's paper \cite{V}. Proposition \ref{prop5} is an important step in the proof of Theorem \ref{th1}.

Part of this work was done when the second author visited Weizmann Institute of Science. He is grateful to the Institute for its hospitality and for the excellent working conditions. He is also thankful to Ofer Zeitouni for helpful discussions.

We also thank A. Koldobsky for discussions on the contents of this paper.

\section{A lower bound for all codimensions} \label{sec: multidim}

In this section, we prove Theorem \ref{thm: section}.
Let $F \subset \R^n$ be an affine subspace whose distance to the origin is $1/2$.
 We will represent $F$ as $F= \frac{1}{2} v+E$, where $E \subset \R^n$ is an $(n-d)$-dimensional linear subspace, and $v \in E^{\perp}, \ |v| = 1$.
 Denote by $P: \R^n \to \R^n$  the orthogonal projection onto $E^{\perp}$.

 The strategy of the proof will depend on the position of the space $E$ and the magnitude of the largest coordinate of $v$.
 We start from the case when $E$ is almost orthogonal to a coordinate vector and $v$ is almost parallel to this vector.

 \begin{lemma} \label{lem: parallel}
   For any $d<n$, there exists $\e_1(d), \delta_1(d)$ such that if $|P e_1| \ge 1-\delta_1(d)$ and $v=\frac{P e_1}{|P e_1|}$, then
   \[
    \vol_{n-d} \left(Q_n \cap \left(\frac{1}{2} v+E \right) \right) \ge \e_1(d).
   \]
 \end{lemma}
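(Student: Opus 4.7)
The plan is to reduce the section volume to a one-dimensional integral of codimension-$d$ cross-section volumes of $Q_{n-1}$, where the Vaaler and Ball bounds combined with log-concavity supply $n$-independent control. Set $\a := |Pe_1|$ and $u := e_1 - \a v = (I-P)e_1 \in E$, so $|u|^2 = 1-\a^2$, and put $W := E \cap u^\perp \subset e_1^\perp$. Slice $F = \tfrac{1}{2}v + E$ by the hyperplanes $\{x_1 = s\}$, $s \in [-\tfrac12,\tfrac12]$: each slice $F_s$ is an $(n-d-1)$-dimensional affine subspace parallel to $W$, and the coarea formula with Jacobian $|u|$ (the norm of the projection of $e_1$ onto the direction $E$ of $F$) gives
\[
 \vol_{n-d}(Q_n \cap F) = \frac{1}{|u|}\int_{-1/2}^{1/2} \vol_{n-d-1}(Q_n \cap F_s)\, ds.
\]
Orthogonally projecting each slice onto $e_1^\perp$ identifies $Q_n \cap F_s$ with $Q_{n-1} \cap (\tau(s)\, w + W)$, where $w \in e_1^\perp$ is the unit vector in the direction of $v - \a e_1$ (which is perpendicular to $W$) and $\tau(s) = (1-2\a s)/(2|u|)$. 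Changing variables yields the key identity
\[
 \vol_{n-d}(Q_n \cap F) = \frac{1}{\a}\int_{r/2}^{1/(2r)} h(\tau)\, d\tau, \qquad r := \sqrt{\tfrac{1-\a}{1+\a}},
\]
where $h(\tau) := \vol_{n-d-1}(Q_{n-1} \cap (\tau w + W))$.

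By Brunn's theorem $h$ is log-concave, and $h$ is even by the symmetry of $Q_{n-1}$. Since $W$ has codimension $d$ and $W \oplus \R w$ has codimension $d-1$ in $e_1^\perp \cong \R^{n-1}$, the Vaaler and Ball bounds for the $(n-1)$-cube give the dimension-free inequalities $1 \le h(0) \le 2^{d/2}$ and $1 \le \int_\R h(\tau)\, d\tau \le 2^{(d-1)/2}$; in particular $\int_0^\infty h \ge 1/2$. It remains to show that the window $[r/2,\,1/(2r)]$ loses at most a universal fraction of $\int_0^\infty h$ once $r$ is sufficiently small. The head is trivial: $\int_0^{r/2} h \le h(0)\cdot r/2 \le 2^{(d-2)/2}\, r$. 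For the tail, normalize $\tilde h := h/\int h$ to a symmetric log-concave probability density on $\R$; the standard one-dimensional isotropic-constant inequality $\tilde h(0)\cdot \sigma(\tilde h) \ge c_0$ (absolute $c_0 > 0$) combined with $\tilde h(0) \ge 2^{-(d-1)/2}$ yields $\int \tau^2 h(\tau)\,d\tau \le c_0^{-2}\cdot 2^{(3d-3)/2}$, and Markov's inequality gives $\int_{1/(2r)}^\infty h \le 4c_0^{-2}\cdot 2^{(3d-3)/2}\, r^2$.

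Choosing $r_0(d)$ small enough (explicit in $d$) that the head and tail each contribute at most $1/8$ forces $\int_{r/2}^{1/(2r)} h \ge 1/4$, whence $\vol_{n-d}(Q_n \cap F) \ge 1/4$. Setting $\e_1(d) := 1/4$ and taking $\d_1(d)$ so that $\a \ge 1-\d_1(d)$ corresponds to $r \le r_0(d)$ completes the bound for $\a < 1$; the boundary case $\a = 1$, in which $F$ lies entirely in the face $\{x_1 = 1/2\} \cong Q_{n-1}$ and $Q_n \cap F$ is a central section of $Q_{n-1}$ by $E$, is handled directly by Vaaler's theorem. The main technical obstacle is keeping both the head and the tail bounds $n$-independent: the head requires only the Ball upper bound on $h(0)$, while the tail crucially uses both the Ball upper bound on $\int h$ and the one-dimensional log-concave isotropic-constant inequality, which together convert the Vaaler/Ball estimates into a dimension-free second-moment bound for codimension-$d$ cube sections.
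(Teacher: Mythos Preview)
Your geometric slicing approach is genuinely different from the paper's and, once a small slip is corrected, it works. The paper proceeds via Fourier analysis: it writes $f_X(\tfrac12 v)$ by Fourier inversion, applies $2\cos A\sin B=\sin(A{+}B)-\sin(A{-}B)$ to the first factor, and after a linear change of variables recognizes each resulting integral as the volume of a \emph{central} $(n-d)$-section of $Q_n$; Vaaler's lower bound on the first term and Ball's upper bound on the second then give the estimate directly. Your route avoids Fourier analysis entirely: you fiber $F$ by the level sets of $x_1$, identify each fiber isometrically with a parallel codimension-$d$ section of $Q_{n-1}$, and reduce the problem to the one-variable even log-concave function $h$, which is again controlled by Vaaler and Ball. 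Your argument is more elementary and makes the role of $|Pe_1|\to 1$ transparent, since the window $[r/2,\,1/(2r)]$ visibly exhausts $(0,\infty)$ as $r\to 0$; the paper's method, on the other hand, yields an explicit closed-form lower bound and dovetails with the Fourier machinery used throughout Section~\ref{sec: multidim}.

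There is one genuine slip in the tail estimate. You invoke ``$\tilde h(0)\cdot\sigma(\tilde h)\ge c_0$'' and combine it with $\tilde h(0)\ge 2^{-(d-1)/2}$ to deduce an \emph{upper} bound on $\int\tau^2 h$; but a lower bound on $\tilde h(0)\sigma$ together with a lower bound on $\tilde h(0)$ cannot bound $\sigma$ from above. What you need is the \emph{upper} isotropic-constant inequality for symmetric log-concave densities, $\tilde h(0)\cdot\sigma(\tilde h)\le C_0$ (one may take $C_0=1/\sqrt{2}$, attained by the two-sided exponential). Combined with $\tilde h(0)\ge 2^{-(d-1)/2}$ this gives $\sigma^2\le C_0^{\,2}\,2^{d-1}$ and hence $\int\tau^2 h=\sigma^2\!\int h\le C_0^{\,2}\,2^{(3d-3)/2}$, which is the bound you then feed into Markov's inequality. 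With that sign fixed, the head/tail splitting and the rest of the argument go through unchanged.
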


 \begin{proof}
 Assume for a moment that $e_1 \perp E$, and thus $v=e_1$. Then $Q_n \cap \left(\frac{1}{2} v+E \right)$ is a central section of the $(n-1)$-dimensional face of $Q_n$ containing $\frac{1}{2}e_1$. In this case,
    \[
    \vol_{n-d} \left(Q_n \cap \left(\frac{1}{2} v+E \right) \right) \ge 1
   \]
 by Vaaler's theorem \cite{Va}. This means that we can assume that $|Pe_1|<1$ for the rest of the proof.

    A random point $\xi \in Q_n$ can be considered as a random vector of density $1$ in the cube.
In this probabilistic interpretation, the volume of the section $ \vol_{n-d}(Q_n \cap (E+u))$ is the density of the random vector $P \xi$ distributed in $E^\perp$ at the point $u \in E^\perp$. It would be more convenient to consider this random vector distributed  in $\R^d$ instead.
To this end, notice that  the singular value decomposition of $P$ yields the existence of a
$d \times n$ matrix $R$ satisfying
\[
P = R^\top R, \quad R R^\top = I_d.
\]
Therefore, $ f_X(u)=\vol_{n-d}(Q_n \cap (E+u))$ can be viewed as the density of the vector $R \xi$ in $\R^d$. We will use the geometric and the probabilistic  interpretation interchangeably throughout the proof.

  The Fourier transform of the random variable $X=R \xi$ can be written as
 \[
   \phi_X(t)= \int_{\R^d} f_X(x) \exp(- \ii \cdot 2 \pi \pr{x}{t}) \, dx
   =\prod_{j=1}^{n} \frac{\sin(\pi \pr{Re_j}{t})}{\pi \pr{Re_j}{t}}
 \]
 for $t \in \R^d$, where we used the normalization by $2 \pi$ for convenience.
 By the Fourier inversion formula,
 \[
  f_X \left(\frac{1}{2}v \right)= \int_{\R^d} \exp(\ii \pi \pr{v}{t}) \phi_X(t) \, dt
  = \frac{1}{\pi^d}  \int_{\R^d} \cos(\pr{v}{t}) \phi_X(t/\pi) \, dt.
 \]
  Hence,
 \begin{align*}
   2f_X \left(\frac{1}{2}v \right)
   & = \frac{2}{\pi^d} \int_{\R^d} \cos (\pr{v}{t}) \prod_{j=1}^{n} \frac{\sin( \pr{Re_j}{t})}{ \pr{Re_j}{t}} \, dt \\
    & =\frac{1}{\pi^d}  \int_{\R^d} \frac{\sin \left( \left(\frac{1}{|Re_1|}+ 1 \right) \pr{Re_1}{t} \right)}{\pr{Re_1}{t}} \prod_{j=2}^{n} \frac{\sin( \pr{Re_j}{t})}{\pr{Re_j}{t}} \, dt \\
    & \qquad - \frac{1}{\pi^d}  \int_{\R^d} \frac{\sin \left( \left(\frac{1}{|Re_1|}- 1 \right) \pr{Re_1}{t} \right)}{ \pr{Re_1}{t}} \prod_{j=2}^{n} \frac{\sin(\pr{Re_j}{t})}{\pr{Re_j}{t}} \, dt.
 \end{align*}
 Define $d \times d$ matrices $\Lambda_+, \Lambda_-: \R^d \to \R^d$ by
 \[
 \Lambda_\pm
 =\left( \left(\frac{1}{|Re_1|}\pm 1 \right)^2 (Re_1)(Re_1)^\top
   + \sum_{j=2}^n (Re_j)(Re_j)^\top \right)^{-1/2}.
 \]
 As $|Re_1|=|Pe_1|<1$, the matrix $\Lambda_-$ is well-defined.
 Then
 \[
  \Lambda_+^{-1}|_{(Re_1)^{\perp}}=\Lambda_-^{-1}|_{(Re_1)^{\perp}}
  =\text{id}|_{(Re_1)^{\perp}},
 \]
 and so
 \begin{align*}
  \Det(\Lambda_+)
  &=\frac{1}{\Det(\Lambda_+^{-1})}
  =\frac{1}{|\Lambda_+^{-1} Re_1|}
  =\left( \left(1+|Re_1| \right)^2 + \sum_{j=2}^{n} \pr{Re_1}{Re_j}^2  \right)^{-1/2} \\
  &= (2 + 2 |Re_1|)^{-1/2}.
  \intertext{Similarly,}
  \Det(\Lambda_-)
  &=\frac{1}{\Det(\Lambda_-^{-1})}
  =\frac{1}{|\Lambda_-^{-1} Re_1|}
  =\left( (1-|Re_1|)^2 + \sum_{j=2}^{n} \pr{Re_1}{Re_j}^2 \right)^{-1/2} \\
  &= (2 - 2 |Re_1|)^{-1/2}.
 \end{align*}
 Using the change of variables in the integrals above, we can write
 \begin{align*}
 2 f_X \left(\frac{1}{2}v \right)
     = & \frac{1}{\pi^d} \left(\frac{1}{|Re_1|}+ 1 \right)  \Det(\Lambda_+) \int_{\R^d} \prod_{j=1}^{n} \frac{\sin(\pr{\theta_j}{t})}{\pr{\theta_j}{t}} \, dt \\
     - &  \frac{1}{\pi^d} \left(\frac{1}{|Re_1|}- 1 \right)  \Det(\Lambda_-) \int_{\R^d} \prod_{j=1}^{n} \frac{\sin(\pr{\eta_j}{t})}{\pr{\eta_j}{t}} \, dt ,
 \end{align*}
 where
 \[
   \begin{cases}
     \theta_1=  (\frac{1}{|Re_1|}+ 1) \Lambda_+ Re_1, \\
     \theta_j=\Lambda_+ Re_j,  \mbox{ for } j>1,
   \end{cases}
   \qquad
   \begin{cases}
     \eta_1= (\frac{1}{|Re_1|}- 1) \Lambda_- Re_1, \\
     \eta_j=\Lambda_- Re_j,  \mbox{ for } j>1,
   \end{cases}
 \]
 Note that
 \[
  \sum_{j=1}^{n} \theta_j \theta_j^\top = \sum_{j=1}^{n} \eta_j \eta_j^\top = I_d.
 \]
 This allows to view
  both integrals above as the volumes of certain sections of $Q_n$ by $(n-d)$-dimensional linear subspaces.
  More precisely,
  \begin{align*}
    \frac{1}{\pi^d}  \int_{\R^d} \prod_{j=1}^{n} \frac{\sin(\pr{\theta_j}{t})}{\pr{\theta_j}{t}} \, dt
    &= \vol_{n-d}(Q_n \cap E_1) \\
    \intertext{and}
    \frac{1}{\pi^d}  \int_{\R^d} \prod_{j=1}^{n} \frac{\sin(\pr{\eta_j}{t})}{\pr{\eta_j}{t}} \, dt
    &= \vol_{n-d}(Q_n \cap E_2)
  \end{align*}
  for some linear subspaces $E_1,E_2 \subset \R^n$.
  This can be easily checked using the Fourier inversion formula as above.
  A theorem of Vaaler \cite{Va} asserts that the volume of any central section of the unit cube is at least $1$, and a theorem of Ball \cite{B1}  states that it does not exceed $(\sqrt{2})^d$.
  Therefore,
 \begin{align*}
   2f_X \left(\frac{1}{2}v \right)
   & \ge \left(\frac{1}{|Re_1|}+ 1 \right)  \Det(\Lambda_+)
  - (\sqrt{2})^d \left(\frac{1}{|Re_1|}- 1 \right)  \Det(\Lambda_-) \\
   & = \frac{1}{\sqrt{2} |Re_1|} \left(1+|Re_1| \right)^{1/2} - \frac{(\sqrt{2})^{d-1}}{|R e_1|} \left(1-|Re_1| \right)^{1/2}
   \ge \e_1(d),
 \end{align*}
 for some $\e_1(d)>0$ whenever $|Pe_1| =|Re_1| \ge 1-\delta_1(d)$ for an appropriately small $\delta_1(d)>0$.
 \end{proof}

 The previous lemma provided a lower bound for the volume of the section if the vector $v$ has the form $\frac{Pe_1}{|Pe_1|}$.
 We will now extend this bound to the vectors which are close to this one.

\begin{lemma}  \label{lem: almost parallel}
  For any $d \in \N$, there exist $\delta_2(d),\e_2(d)$ such that if $|Pe_1| \ge 1-\delta_1(d)$ and $v=\frac{Pe_1}{|Pe_1|}$, then for any $w \in E^\perp$ with $w \perp v, \ |w|<\delta_2(d)$,
  \[
   \vol_{n-d} \left( Q_n \cap \left(\frac{1}{2} v +w+E \right) \right) \ge \e_2(d).
  \]
\end{lemma}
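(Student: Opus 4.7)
The section volume $\Phi(E, u) = \vol_{n-d}(Q_n \cap (u+E))$ is the density $f_X$ on $E^\perp \cong \R^d$ of the random vector $X = R\xi$, where $\xi$ is uniform on $Q_n$. Consequently, $f_X$ is log-concave (Pr\'ekopa--Leindler applied to the indicator of $Q_n$) and even. Vaaler's theorem gives $f_X(0) \ge 1$, Brunn's theorem together with Ball's theorem gives $\sup f_X = f_X(0) \le 2^{d/2}$, and the covariance of $X$ equals $\tfrac{1}{12} I_d$ since the coordinates of $\xi$ are i.i.d.\ uniform on $[-1/2,1/2]$ and $R R^\tran = I_d$. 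Lemma~\ref{lem: parallel} supplies the pointwise bound $f_X(u_0) \ge \e_1(d)/2$ with $u_0 = v/2$, and by evenness $f_X(-u_0) \ge \e_1(d)/2$.

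My plan is to parlay these lower bounds on the segment $[-u_0,u_0]$ into a uniform-in-$n$ lower bound on a full neighborhood of $u_0$. First, along $[-u_0,u_0]$, log-concavity combined with the values at the endpoints and at the midpoint gives $\log f_X(\lambda u_0) \ge \lambda \log (\e_1(d)/2)$ for $\lambda \in [0,1]$, hence $f_X(\lambda u_0) \ge \e_1(d)/2$ for every $\lambda \in [-1,1]$. The convex level set $L := \{u \in E^\perp : f_X(u) \ge \e_1(d)/2\}$ therefore contains the whole segment $[-u_0, u_0]$.

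Next, to propagate the bound transversely, I would restrict $f_X$ to the two-dimensional plane $P = \Span(v,w) \subset E^\perp$ and work with $h := f_X|_P$, which is log-concave on $P$, satisfies $h \le 2^{d/2}$, $h(0) \ge 1$, $h(\pm u_0) \ge \e_1(d)/2$, and is controlled through the ambient isotropic structure of $X$. A quantitative stability estimate for log-concave densities with prescribed upper bound, prescribed second moment (inherited from $\operatorname{Cov}(X) = \tfrac{1}{12} I_d$), and a pointwise lower bound at a distinguished point forces the level set $L$ to contain an explicit Euclidean ball around $u_0$ of some radius $\delta_2(d)$ depending only on $d$. Shrinking the threshold from $\e_1(d)/2$ to $\e_2(d)$ if necessary then gives $f_X(u_0 + w) \ge \e_2(d)$ for all $|w| \le \delta_2(d)$, which is the claim.

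The main obstacle is precisely this transverse step. Pure log-concavity together with pointwise lower bounds on a segment is not by itself enough, since a log-concave density can decay arbitrarily sharply in directions perpendicular to such a segment. The critical additional ingredient is that $\operatorname{Cov}(X) = \tfrac{1}{12} I_d$ is independent of $n$, which rules out such sharp peaking and produces the required uniform-in-$n$ thickness of the level set $L$ near $u_0$. All other constants ($\sup f_X \le 2^{d/2}$, $f_X(0) \ge 1$, $f_X(u_0) \ge \e_1(d)/2$) are likewise dimension-of-codimension dependent only, which is what allows $\e_2(d)$ and $\delta_2(d)$ to be chosen independently of $n$ and of the particular subspace $E$.
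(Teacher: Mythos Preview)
Your setup and the segment bound along $[-u_0,u_0]$ are fine, but the transverse step is a genuine gap. You assert a ``quantitative stability estimate for log-concave densities'' that would force the superlevel set to contain a ball around $u_0$, citing the isotropic covariance $\operatorname{Cov}(X)=\tfrac{1}{12}I_d$ as the mechanism. But this assertion \emph{is} the content of the lemma, and you have not proved it. Moreover, the link you suggest is the wrong one: the covariance constrains \emph{marginals} of $f_X$ (integrals of $f_X$ over affine hyperplanes), whereas you need to control the \emph{restriction} $h=f_X|_P$ to the plane $P=\Span(v,w)$. These are different objects; $h$ is not a probability density on $P$, and no second-moment information is ``inherited'' by it from $\operatorname{Cov}(X)$ in any direct way. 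So the phrase ``controlled through the ambient isotropic structure of $X$'' hides the entire difficulty rather than resolving it.

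The paper does not use the covariance here at all. Instead it applies Lemma~\ref{lem: parallel} a second time, now at codimension $d-1$, to the enlarged linear subspace $\tilde E=\Span(w,E)$. This gives
\[
\int_\R h(x)\,dx=\vol_{n-d+1}\!\Big(Q_n\cap\big(\tfrac12 v+\tilde E\big)\Big)\ge \e_1(d-1),
\]
where $h(x)=\vol_{n-d}\big(Q_n\cap(\tfrac12 v+x\,\tfrac{w}{|w|}+E)\big)$. If $h(|w|)\le h(0)/2$, log-concavity (and evenness) of $h$ force geometric decay, hence $\int_\R h\le 4|w|\,h(0)\le 4|w|\cdot 2^{d/2}$ by Ball's theorem; comparing with the lower bound $\e_1(d-1)$ yields the explicit $\delta_2(d)=\e_1(d-1)/(4\cdot 2^{d/2})$ and $\e_2(d)=\e_1(d)/2$. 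The key idea you are missing is this recursive drop in codimension, which produces the needed lower bound on $\int h$ and replaces your unproven stability estimate.
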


\begin{proof}
  By Lemma \ref{lem: parallel},
  \[
   \vol_{n-d} \left( Q_n \cap \left(\frac{1}{2} v +E \right) \right) \ge \e_1(d).
  \]
   Also, applying the same lemma to the linear subspace $\tilde{E}:=\Span(w,E)$, we get
  \[
   vol_{n-d} \left( Q_n \cap \left(\frac{1}{2} v + \tilde{E} \right) \right) \ge \e_1(d-1).
  \]
  Define the function $h: \R \to \R$ by
  \[
   h(x)=\vol_{n-d} \left( Q_n \cap \left(\frac{1}{2} v +x \frac{w}{|w|}+E \right) \right).
  \]
  Then the previous inequalities read $h(0) \ge \e_1(d), \   \int_{\R} h(x) \, dx \ge \e_1(d-1)$.
  Assume that $h(|w|) \le h(0)/2$. Since the function $h$ is even and log-concave, this implies $h(k|w|) \le 2^{-|k|} h(0)$ for all $k \in \mathbb{Z}$, and hence
  \[
   \e_1(d-1) \le \int_{\R} h(x) \, dx \le 4  |w| \cdot h(0)
   \le  4  |w| \cdot  \vol_{n-d} \left( Q_n \cap E \right) \le 4 |w| \cdot  (\sqrt{2})^d,
  \]
  where we used Ball's theorem \cite{B1}  in the last inequality.
  This means that the statement of the lemma holds with
  \[
   \delta_2(d)= \frac{\e_1(d-1)}{4 (\sqrt{2})^d} \quad \text{and} \quad \e_2(d)=\frac{\e_1(d)}{2}
  \]
  since for $|w| < \delta_2(d)$ we would get a contradiction to our assumption. Thus $h(|w|) > h(0)/2 \ge \e_2(d)$, so the proof is complete.
\end{proof}

We summarize Lemmas \ref{lem: parallel} and \ref{lem: almost parallel} in the following corollary.
\begin{corollary}  \label{cor: almost parallel}
  For any $d \in \N$, there exist $\delta_3(d),\e_3(d)$ such that if $v \in E^\perp, \ |v|=1$ and $\norm{v}_\infty \ge 1-\delta_3(d)$ then
  \[
   \vol_{n-d} \left( Q_n \cap \left(\frac{1}{2} v +E \right) \right) \ge \e_3(d).
  \]
\end{corollary}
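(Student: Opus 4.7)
\emph{Proof plan.} The idea is to deduce the corollary from Lemmas \ref{lem: parallel} and \ref{lem: almost parallel} by rewriting $v$ in a form compatible with the hypothesis of Lemma \ref{lem: almost parallel}, then using Brunn's concavity principle to transfer the resulting bound from a point at distance $\ge 1/2$ to the target point at distance exactly $1/2$ along the same ray from the origin.

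First I would exploit the invariance of $Q_n$ under signed permutations of coordinates to arrange $v_1 = \|v\|_\infty \ge 1 - \delta_3(d)$ with $v_1 \ge 0$. Set $u := Pe_1/|Pe_1|$. Because $v \in E^\perp$, we have
\[
\alpha := \langle u, v\rangle = v_1/|Pe_1|,
\]
and the inequalities $v_1 \le |Pe_1| \le 1$ then give both $|Pe_1| \ge 1 - \delta_3(d)$ and $\alpha \ge 1 - \delta_3(d)$. Decomposing $v = \alpha u + \beta w''$ with $w'' \in u^\perp \cap E^\perp$ a unit vector and $\beta^2 = 1 - \alpha^2$, one obtains $|\beta| \le \sqrt{2\delta_3(d)}$.

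The algebraic heart of the argument is the following identity, obtained by setting $w := (\beta/(2\alpha))\, w''$: since $4|w|^2 = (1 - \alpha^2)/\alpha^2$, a direct computation gives
\[
u + 2w \;=\; v/\alpha \;=\; \sqrt{1 + 4|w|^2}\; v.
\]
Note that $w \perp u$, $w \in E^\perp$, and $|w| \le \sqrt{2\delta_3(d)}/(2(1 - \delta_3(d)))$. Choosing $\delta_3(d) := \min\!\bigl(\delta_1(d),\, \delta_2(d)^2/4\bigr)$ ensures $|Pe_1| \ge 1 - \delta_1(d)$ and $|w| < \delta_2(d)$, so Lemma \ref{lem: almost parallel} applies with $u$ in the role of $v$ and the $w$ just defined. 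It yields
\[
\vol_{n-d}\!\left(Q_n \cap \bigl(\tfrac{1}{2} u + w + E\bigr)\right) \;\ge\; \e_2(d).
\]

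To conclude, observe that $\tfrac{1}{2}u + w = \tfrac{1}{2}(u + 2w) = \tfrac{1}{2}\sqrt{1 + 4|w|^2}\; v$, which sits on the ray $\{tv : t \ge 0\}$ at the scalar $s := \tfrac{1}{2}\sqrt{1 + 4|w|^2} \ge 1/2$. Brunn's theorem tells us that $t \mapsto \vol_{n-d}(Q_n \cap (tv + E))^{1/(n-d)}$ is an even concave function of $t$, hence non-increasing in $|t|$, so the volume at $t = 1/2$ is at least the volume at $t = s$. This gives $\vol_{n-d}(Q_n \cap (\tfrac{1}{2}v + E)) \ge \e_2(d)$, and one may take $\e_3(d) := \e_2(d)$. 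There is no genuine obstacle here: the only mild trick is the observation that $v$ and $u + 2w$ are collinear, which is precisely what permits a single application of Brunn to close the gap between the distance $1/2$ required by the corollary and the (slightly larger) distance at which Lemma \ref{lem: almost parallel} controls the section.
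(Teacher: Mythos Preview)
Your proof is correct and follows essentially the same route as the paper: both arguments reduce to Lemma~\ref{lem: almost parallel} by first showing $|Pe_1|\ge 1-\delta$ and that $v$ is close to $u:=Pe_1/|Pe_1|$. The paper simply bounds $|v-u|\le\sqrt{2\delta}+\delta$ and asserts that ``the conditions of Lemma~\ref{lem: almost parallel} are satisfied,'' implicitly taking $w=\tfrac12(v-u)$; you are more scrupulous about the hypothesis $w\perp u$ in that lemma, which $\tfrac12(v-u)$ does not literally satisfy. Your remedy---writing $v=\alpha u+\beta w''$ orthogonally so that $\tfrac{1}{2\alpha}v=\tfrac12 u+w$ with $w\perp u$, applying Lemma~\ref{lem: almost parallel} at the slightly farther point $\tfrac{1}{2\alpha}v$, and then invoking Brunn/log-concavity along the ray $\R_+ v$ to pull the bound back to $\tfrac12 v$---is a clean way to close that small gap. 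The substance of the two arguments is the same; yours just makes explicit the one extra step (the monotonicity along the ray) that the paper leaves to the reader.
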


\begin{proof}
 Without loss of generality, assume that $v_1=\pr{v}{e_1} \ge 1-\d$, where $\d=\d_3(d)$ will be chosen later.
 Then
 \[
  |Pe_1| \ge \pr{Pe_1}{v}=\pr{e_1}{v} \ge 1-\d,
 \]
 and
 \begin{align*}
  \left| v- \frac{Pe_1}{|Pe_1|} \right|
  &\le |v-Pe_1|+ \left| Pe_1-\frac{Pe_1}{|Pe_1|} \right| \\
  &\le \left( |v|^2-2 \pr{v}{Pe_1} + |Pe_1|^2 \right)^{1/2} + \big( 1- |Pe_1| \big) \\
  &\le (2-2(1-\d))^{1/2} + \delta.
 \end{align*}
 This means that choosing $\d$ small enough, we can ensure that the conditions of Lemma \ref{lem: almost parallel} are satisfied.
\end{proof}

 Let $X=P \xi$, where $\xi$ is a random vector uniformly distributed in $Q_n$. The density $f_X$  of the vector $X$ is even and log-concave, so the set $D:=\{y \in E^\perp: \ f_X(y) \ge f_X(\frac{1}{2} v) \}$ is convex and symmetric.
 We need the following simple lemma which would allow us to reduce the estimate of the density of a  multi-dimensional projection to a bound on a probability of a half-space.

\begin{lemma} \label{lem: to one dim}
  Let
  \[
   D:= \left \{y \in E^\perp: \ f_X(y) \ge f_X \left(\frac{1}{2}v \right) \right \}.
  \]
  Let $S \subset E^\perp$ be a supporting hyperplane to $D$ at $v$ in $E^\perp$, and write $S= \tau u+L$, where $L$ is a linear subspace of $E^\perp$,  $u \in E^\perp \cap S^{n-1}$ satisfies $u \perp L$, and $\tau \ge 0$.
  Then $\tau \le \frac{1}{2}$ and
  \[
    f_X \left(\frac{1}{2}v \right)  \ge \max \left(f_X(\tau u), c(d) (\P(\pr{\xi}{u} \ge \tau))^{1+d/2} \right)
  \]
  for some $c(d)>0$.
\end{lemma}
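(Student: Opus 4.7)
The plan is to handle the three claims in turn: the two geometric statements $\tau \le 1/2$ and $f_X(\frac{1}{2}v) \ge f_X(\tau u)$ follow from the supporting hyperplane structure together with Vaaler's lower bound, while the probabilistic bound requires a separate log-concavity argument.

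For $\tau \le 1/2$: since $\frac{1}{2}v$ lies on $S = \tau u + L$, writing $\frac{1}{2}v = \tau u + w_0$ with $w_0 \in L$ gives $\pr{\frac{1}{2}v}{u} = \tau$, so by Cauchy--Schwarz $\tau \le |\frac{1}{2}v| \cdot |u| = 1/2$.

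For $f_X(\frac{1}{2}v) \ge f_X(\tau u)$: set $M := f_X(\frac{1}{2}v)$. If $M > 1$ the conclusion is trivial since $\P(\pr{\xi}{u} \ge \tau) \le 1$, so assume $M \le 1$. Vaaler's theorem gives $f_X(0) = \vol_{n-d}(Q_n \cap E) \ge 1 \ge M$, so $0 \in D$. The supporting hyperplane $S$ at $\frac{1}{2}v \in \partial D$ then forces $D$ to lie in the closed half-space $\{y : \pr{y}{u} \le \tau\}$, and since $\tau u \in S$ does not lie in the interior of $D$, we conclude $f_X(\tau u) \le M$.

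For the main bound $M \ge c(d) \P(\pr{\xi}{u} \ge \tau)^{1+d/2}$, my plan is to combine log-concavity of $f_X$ with the geometric observation that $D$ is contained in the slab $\{|\pr{y}{u}| \le \tau\}$ (using the symmetry of $D$ and the supporting argument applied at both $\pm \frac{1}{2}v$). Set $D^+ := D \cap \{\pr{y}{u} \ge 0\}$. For each $\lambda \in [0,1]$, the translate $\frac{1}{2}v + \lambda D^+$ sits inside the half-space $\{\pr{y}{u} \ge \tau\}$, so its $f_X$-mass is bounded above by $p := \P(\pr{\xi}{u} \ge \tau)$. Using Pr\'ekopa--Leindler together with the pointwise lower bound $f_X \ge M$ on $D$, I would derive a lower bound of order $M \lambda^d \vol_d(D^+)$ for this mass for $\lambda$ small enough, yielding $M \lambda^d \vol_d(D^+) \lesssim p$. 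Combining this with the upper bound $\vol_d(D) \le 1/M$ coming from $\int_D f_X \le 1$ and optimizing $\lambda$ produces the exponent $1 + d/2$.

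The main obstacle will be executing the Pr\'ekopa--Leindler step in a dimension-free way while tracking the correct power of $p$. The exponent $1 + d/2$ should split naturally into a one-dimensional piece (the factor $p$ coming from tail mass along $u$) and a $(d-1)$-dimensional transverse piece (the factor $p^{d/2}$ coming from the cross-section of $D$ inside $L$); ensuring the cross-sectional volume of $D$ is bounded below independently of $n$ will likely require Ball's upper bound $f_X(0) \le 2^{d/2}$ to prevent $D$ from becoming too flat in the transverse directions.
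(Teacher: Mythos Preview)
Your handling of $\tau \le \tfrac12$ and $f_X(\tfrac12 v)\ge f_X(\tau u)$ is fine and matches the paper. The third claim, however, has a real gap, and the Pr\'ekopa--Leindler plan points in the wrong direction.

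The set $\tfrac12 v + \lambda D^+$ lies in the closed half-space $\{y:\pr{y}{u}\ge\tau\}$, hence outside $\operatorname{int}(D)$; on that region $f_X\le M$, not $\ge M$. Log-concavity only lets you push \emph{lower} bounds \emph{inward}, onto convex combinations of points where you already have them, and those combinations all stay inside $D$. There is no mechanism for producing the lower bound $\int_{\tfrac12 v+\lambda D^+} f_X \gtrsim M\lambda^d\vol_d(D^+)$ you need. Even granting that inequality, your combination step does not close: $M\lambda^d\vol_d(D^+)\lesssim p$ is strongest at $\lambda=1$, giving $M\vol_d(D^+)\lesssim p$, and together with the \emph{upper} bound $\vol_d(D)\le 1/M$ this yields no lower bound on $M$ at all --- you would need a lower bound on $\vol_d(D^+)$, which you never establish and which Ball's theorem does not supply.

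The paper's argument is short and uses $f_X\le M$ on the half-space the right way round. Write $\nu=\P(\pr{\xi}{u}\ge\tau)$. Since $\E|X|^2=\sum_j |Pe_j|^2\,\E\xi_j^2=d/12$, Markov's inequality gives $\P\big(|X|\ge\sqrt{d/\nu}\,\big)\le \nu/12$. Hence the bounded set
\[
K=\Big\{y\in E^\perp:\ \pr{y}{u}\ge\tau,\ |y|\le\sqrt{d/\nu}\,\Big\}
\]
already carries mass $\P(X\in K)\ge\nu-\nu/12\ge\nu/2$. But $K$ lies outside $\operatorname{int}(D)$, so $f_X\le M$ on $K$, and $\vol_d(K)\le (d/\nu)^{d/2}\vol_d(B_2^d)=C(d)\,\nu^{-d/2}$. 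Therefore
\[
\frac{\nu}{2}\le \P(X\in K)\le M\cdot\vol_d(K)\le C(d)\,M\,\nu^{-d/2},
\]
which rearranges to $M\ge c(d)\,\nu^{1+d/2}$. No log-concavity or Pr\'ekopa--Leindler is needed here; the whole point is the second-moment localization via Markov, which your sketch does not contain.
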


 \begin{proof}
 The inequalities $\tau \le \frac{1}{2}$ and  $f_X(\frac{1}{2}v) \ge f_X(\tau u)$ follows immediately from $\tau u \in S$ and the convexity of $D$.

   To prove the other inequality, denote $\nu=\P(\pr{\xi}{u} \ge \tau)$ and
    set
    \[
     K:= \left \{y \in E^\perp: \ \pr{y}{u} \ge \tau \text{ and } |y| \le  \sqrt{\frac{d}{\nu}} \right \}.
    \]
    Note that $\E |X|^2 = \E |P \xi|^2= \sum_{j=1}^n |Pe_j|^2 \E \xi_j^2= \frac{d}{12}$.
  Using Markov's inequality, we get
  \[
   \P(X \in K)
   \ge \\P(\pr{X}{u} \ge \tau) - \P \left(|X| \le  \sqrt{\frac{d}{\nu}} \right)
   \ge \nu - \frac{\E |X|^2}{d/\nu}
   \ge \frac{\nu}{2}.
  \]
  For any $y \in K, \ f_X(y) \le f_X(\frac{1}{2}v)$ since $K \subset E^\perp \setminus D$. Therefore
  \[
   f_X \left(\frac{1}{2}v \right) \ge \frac{\P(X \in K)}{\vol_{d}(K)}.
  \]
  As $\vol_d(K) \le \big( \sqrt{d/\nu} \big)^d \vol_d(B_2^d) \le C(d) \nu^{-d/2}$, the lemma follows.
 \end{proof}

To use Lemma \ref{lem: to one dim}, we have to bound $\P(\pr{\xi}{u} \ge \tau)$ for a unit vector $u \in S^{n-1}$.
This bound is obtained differently depending on whether the vector $u$ is close to a low-dimensional space. We consider the case when it is far from such spaces, i.e., it has enough mass supported on small coordinates.
The opposite case will be considered in the proof of Theorem \ref{thm: section}.
\begin{lemma} \label{lem: B-E}
 Let $u \in S^{n-1}$, and let $\xi$ be a random vector uniformly distributed in $Q_n$.
 For any $\e>0$, there exist $\delta, \eta>0$ such that if $J_\delta= \{j: \ |u_j|<\delta \}$ and $\sum_{j \in J_\delta} u_j^2>\e^2$  then
 \[
  \P(\pr{\xi}{u} \ge 1) \ge \eta.
 \]
\end{lemma}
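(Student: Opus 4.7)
The plan is to decompose $\langle\xi,u\rangle$ into a ``spread-out'' part that is close to Gaussian by Berry--Esseen, and a ``concentrated'' part that is bounded in a useful window by Chebyshev. Write
\[
 \langle \xi, u\rangle = S_{\mathrm{small}} + S_{\mathrm{big}}, \qquad
 S_{\mathrm{small}} := \sum_{j \in J_\delta} u_j \xi_j, \qquad
 S_{\mathrm{big}} := \sum_{j \notin J_\delta} u_j \xi_j,
\]
and set $\sigma_{\mathrm{small}}^2 := \mathrm{Var}(S_{\mathrm{small}}) = \frac{1}{12}\sum_{j\in J_\delta} u_j^2$, so $\sigma_{\mathrm{small}} \ge \e/\sqrt{12}$ by hypothesis and $\sigma_{\mathrm{small}}\le 1/\sqrt{12}$ since $|u|=1$.

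\textbf{Step 1 (Berry--Esseen on $S_{\mathrm{small}}$).} Since $\xi_j$ is uniform on $[-1/2,1/2]$, one has $\E|\xi_j|^3 = 1/32$, and the third absolute moment of the sum $S_{\mathrm{small}}$ satisfies
\[
 L_3 := \sum_{j \in J_\delta} \E|u_j\xi_j|^3
 \le \frac{1}{32}\max_{j\in J_\delta}|u_j| \cdot \sum_{j\in J_\delta} u_j^2
 \le \frac{\delta}{32}.
\]
The Berry--Esseen inequality then yields
\[
 \sup_{y\in\R}\left|\P(S_{\mathrm{small}} \ge y) - \P(\sigma_{\mathrm{small}} Z \ge y)\right| \le C \cdot \frac{L_3}{\sigma_{\mathrm{small}}^3} \le C' \cdot \frac{\delta}{\e^3},
\]
where $Z$ is a standard Gaussian. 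By choosing $\delta=\delta(\e)$ sufficiently small we can make the right-hand side smaller than any prescribed $\kappa>0$.

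\textbf{Step 2 (Chebyshev on $S_{\mathrm{big}}$).} Independence of the coordinates gives $\mathrm{Var}(S_{\mathrm{big}}) = \frac{1}{12} - \sigma_{\mathrm{small}}^2 \le \frac{1}{12}$. Hence, by Chebyshev,
\[
 \P\!\left(|S_{\mathrm{big}}| \le \tfrac{1}{2}\right) \ge 1 - \frac{1/12}{1/4} = \frac{2}{3}.
\]
On the event $\{|S_{\mathrm{big}}| \le 1/2\}$, we have $1-S_{\mathrm{big}} \in [1/2,\,3/2]$, so
\[
 \frac{1-S_{\mathrm{big}}}{\sigma_{\mathrm{small}}} \le \frac{3/2}{\e/\sqrt{12}} = \frac{3\sqrt{12}}{2\e}=:M(\e).
\]
Consequently, $\P(\sigma_{\mathrm{small}} Z \ge 1-S_{\mathrm{big}}) \ge 1-\Phi(M(\e))$, a strictly positive constant depending only on $\e$.

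\textbf{Step 3 (Combine).} Conditioning on the independent random variable $S_{\mathrm{big}}$ and applying Step 1 pointwise,
\begin{align*}
 \P\bigl(\langle\xi,u\rangle \ge 1\bigr)
 &\ge \E\!\left[ \mathbf{1}_{\{|S_{\mathrm{big}}|\le 1/2\}} \cdot \P(S_{\mathrm{small}} \ge 1-S_{\mathrm{big}} \mid S_{\mathrm{big}}) \right] \\
 &\ge \E\!\left[ \mathbf{1}_{\{|S_{\mathrm{big}}|\le 1/2\}}
 \left( 1-\Phi\!\left(\tfrac{1-S_{\mathrm{big}}}{\sigma_{\mathrm{small}}}\right) - C'\tfrac{\delta}{\e^3} \right) \right] \\
 &\ge \tfrac{2}{3}\bigl(1-\Phi(M(\e))\bigr) - C'\tfrac{\delta}{\e^3}.
\end{align*}
Choosing $\delta=\delta(\e)$ so that $C'\delta/\e^3 \le \tfrac{1}{3}(1-\Phi(M(\e)))$ gives the result with $\eta := \tfrac{1}{3}(1-\Phi(M(\e)))>0$.

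The main technical point is verifying that the Berry--Esseen error $C'\delta/\e^3$ can be absorbed into the Gaussian tail; once the decomposition $S_{\mathrm{small}}+S_{\mathrm{big}}$ is chosen, the interplay of the two controls (Berry--Esseen and Chebyshev) is routine. The cube-specific computation of $\E|\xi_j|^3=1/32$ is only used to keep constants explicit.
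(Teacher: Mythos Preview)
Your proof is correct and uses the same basic decomposition and Berry--Esseen ingredient as the paper. The only difference is in how the ``big'' part $S_{\mathrm{big}}$ is handled: you confine it to $[-1/2,1/2]$ via Chebyshev and then condition, whereas the paper simply uses the symmetry of $S_{\mathrm{big}}$ to get $\P(S_{\mathrm{big}}\ge 0)=\tfrac12$, and then only needs the single Berry--Esseen estimate $\P(S_{\mathrm{small}}\ge 1)\ge \P(g\ge 1/\e)-c\delta/\e>0$ at one threshold, combining via independence as $\P(\langle\xi,u\rangle\ge 1)\ge \P(S_{\mathrm{small}}\ge 1)\cdot\P(S_{\mathrm{big}}\ge 0)$. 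This is shorter and avoids the conditioning step; your version, on the other hand, does not rely on symmetry of the coordinates and would survive in settings where $S_{\mathrm{big}}$ is merely centered with bounded variance.
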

\begin{proof}
  Denote $Y=\sum_{j \in J_\delta}  \xi_j u_j, \ Z=\sum_{j \notin J_\delta}  \xi_j u_j$, where $\xi_1 \etc \xi_n$ are i.i.d. random variables uniformly distributed in $[-\frac{1}{2}, \frac{1}{2}]$.

  Let $g$ be the standard normal random variable.
  By the Berry-Esseen theorem,
  \begin{align*}
    \P( \sum_{j \in J_\delta} \xi_j u_j \ge 1)
    & \ge \P(\sqrt{\sum_{j \in J_\delta}  u_j^2} \cdot g  \ge 1) - c \max_{j \in J_\delta} \frac{|u_j|}{\sqrt{\sum_{j \in J_\delta}  u_j^2}} \\
     & \ge \P \left(g \ge \frac{1}{\e} \right)-c \frac{\delta}{\e}
     \ge \tilde{\eta} >0
  \end{align*}
  if $\delta=\delta(\e)$ is chosen sufficiently small.
  Hence,
  \[
    \P(\pr{\xi}{u} \ge 1)
     \ge \P \left( Y \ge 1 \text{ and }Z \ge 0 \right)
      \ge  \tilde{\eta} \cdot \frac{1}{2}=:\eta,
  \]
  since $Y$ and $Z$ are independent.
  The lemma is proved.
\end{proof}

Having proved these lemmas, we can derive  Theorem \ref{thm: section}.
\begin{proof}[Proof of Theorem \ref{thm: section}]
Recall that
\[
 \vol_{n-d} \left( Q_n \cap \left( \frac{1}{2} v +E \right) \right)
  =  f_X \left(\frac{1}{2}v \right),
 \]
 where $X=P \xi$ and $\xi$ is a random vector uniformly distributed in $Q_n$.

 Let $u$ and $\tau$ be as in Lemma \ref{lem: to one dim}.
   Take
   \[
    \e= \sqrt{\frac{\e_3(d)}{2} }
    \]
    and choose the corresponding $\delta$ from Lemma \ref{lem: B-E}.
Define  $J_\delta$ as in this lemma.
If $\sum_{j \in J_\delta} u_j^2 \ge \e^2$, then by Lemmas \ref{lem: to one dim} and \ref{lem: B-E},
\[
 f_X \left(\frac{1}{2}v \right) \ge c(d) \big( \P(\pr{\xi}{u} \ge \tau) \big)^{1+d/2}
 \ge c(d) \eta^{1+d/2}
\]
as $\tau \in [0,\frac{1}{2}]$.

 Assume now that $\sum_{j \in J_\delta} u_j^2 \le \e^2$.
 If $\norm{  u}_\infty \ge 1-  \e_3(d)$, then the statement of the theorem follows from Corollary \ref{cor: almost parallel} since $f_X(\frac{1}{2} v) \ge f_X(\tau u) \ge f_X (\frac{1}{2} u)$.
 Thus, we can assume that
 \begin{equation}\label{eq: in Q}
     \norm{  u}_\infty \le 1 - \e_3(d).
 \end{equation}
 We will use the inequality
 \[
 f_X \left(\frac{1}{2}v \right)
   \ge c(d) \big( \P(\pr{\xi}{u} \ge \tau) \big)^{1+d/2}
  \ge c(d) \left( \P \left(\pr{\xi}{u} \ge \frac{1}{2} \right) \right)^{1+d/2}
 \]
  again.
 This shows that to prove the theorem, it is enough to bound $\P \left(\pr{\xi}{u} \ge \frac{1}{2} \right)$ from below by a quantity depending only on $d$.

 Decompose $\pr{\xi}{u}=Y+Z$ where $Y=\sum_{j \in J_\delta}  \xi_j u_j, \ Z=\sum_{j \notin J_\delta}  \xi_j u_j$ as above. Then
\begin{align*}
 \P \left(\pr{\xi}{u} \ge \frac{1}{2} \right)
  & \ge \P \left(Z \ge \frac{1}{2} \text{ and } Y \ge 0 \right)
  = \frac{1}{2} \P \left(Z \ge \frac{1}{2} \right) \\
   & = \frac{1}{2} \P \left(\sum_{j \notin J_\delta} \xi_j w_j \ge \theta \right)
\end{align*}
 where
 \[
  w_j = \frac{u_j}{\sqrt{\sum_{j \notin J_\delta} u_j^2}}
  \quad \text{and} \quad \theta = \frac{1}{2 \sqrt{\sum_{j \notin J_\delta} u_j^2}}.
 \]
 Note that
 \[
  k:=|[n] \setminus J_\delta|= |\{j \in [n]: \ |u_j| \ge \delta \}|
  \le \delta^{-2},
 \]
 where $\delta$ depends only on $d$.
 To simplify the notation, assume that $[n] \setminus J_\d=[k]$.
 We can recast $\P (\sum_{j \notin J_\delta} \xi_j w_j \ge \theta)$ as
 \[
  \P (\sum_{j \notin J_\delta} \xi_j w_j \ge \theta)
  = \vol_k(Q_k \cap (w^{\perp}_+ + \theta w)),
 \]
 where $Q_k=[-\frac{1}{2}, \frac{1}{2} ]^k$,  $w \in S^{k-1}$ is the vector with coordinates $w_j, \ j \in [k]$, and $w^{\perp}_+ = \{y \in  \R^k: \ \pr{y}{w} \ge 0 \}$ is a half-space orthogonal to $w$.
 Previously, we reformulated a geometric problem of bounding the volumes of non-central sections of the cube in a probabilistic language. Here, we reduce it back to a similar geometric problem but in dimension $k$ which depends only on $d$ and codimension $1$.

 By our assumption,
 \[
  \theta
  \le \frac{1}{2 \sqrt{1- \e^2}},
 \]
 and, in view of  \eqref{eq: in Q}, we have
 \[
  \frac{\sqrt{1-\e^2}}{2(1-\e_3(d) ) } w  \in Q_k.
 \]
  Therefore,
\begin{align*}
  &\vol_{k} (Q_k \cap (w^{\perp}_+ + \theta w)) \\
  &\ge \vol_{k} \left( \conv \left(Q_k \cap w^\perp,  \frac{\sqrt{1-\e^2}}{2(1-\e_3(d) )} w \right) \cap (w^{\perp}_+ +  \theta w) \right) \\
  &= \left(1- \theta \left(  \frac{\sqrt{1-\e^2}}{2(1-\e_3(d) )} \right)^{-1} \right)^{k} \cdot
  \vol_{k} \left( \conv \left(Q_k \cap w^\perp,  \frac{\sqrt{1-\e^2}}{2(1-\e_3(d) )} w \right)  \right)   \\
  &\ge  \left(1-  \frac{1- \e_3(d)}{1-  \e^2} \right)^{k} \cdot
  \frac{1}{k!}  \left(  \frac{\sqrt{1-\e^2}}{2(1-\e_3(d) )}  \right)^k \vol(Q _k \cap w^\perp)  \\
  &\ge \left(1- \frac{1- \e_3(d)}{1-  \e^2}  \right)^{k} \cdot
  \frac{1}{k!}  \left(  \frac{\sqrt{1-\e^2}}{2(1-\e_3(d) )}  \right)^k,
\end{align*}
where the last inequality follows from Vaaler's theorem \cite{Va}.
Recalling that $\e=\sqrt{\e_3(d)/2}$ and $k$ depends only on $d$, we see that the quantity above is positive and depends only on $d$ as well.
This completes the proof of the theorem.
\end{proof}

\section{Volume formulas} \label{sec: formulas}

To prove Theorem \ref{th1}, we start with the following known volume formulas.

\begin{proposition}\label{prop2}
Let $a\in \R_+^n, \ |a|=1, \ t \ge 0$. Then
\begin{equation}\label{eq1}
A_\R(a,t) = \frac 2\pi \ \int_0^\infty \ \prod_{j=1}^n \ \frac{\sin(a_js)}{a_js} \ \cos (ts) \ ds \ ,
\end{equation}
\begin{equation} \label{eq2}
A_\C(a,t) = \frac 1 2 \ \int_0^\infty \ \prod_{j=1}^n \ j_1 (a_js) \; J_0(ts) \  s \ ds \ , \ j_1(x):=2 \frac{J_1(x)} x \ .
\end{equation}
Here $J_0$ and $J_1$ denote the standard Bessel functions.
\end{proposition}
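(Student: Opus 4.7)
The plan is to recognize both formulas as instances of Fourier inversion applied to the distribution of a single linear functional $Z:=\langle \xi,a\rangle$ of a random vector $\xi$ uniformly distributed in $Q_n$. Since $|a|=1$, a direct disintegration (coarea) argument identifies $A_\K(a,t)$ with the density $f_Z$ evaluated at $\alpha t$: in the real case this density lives on $\R$, and in the complex case on $\C\cong\R^2$, where the Jacobian of the map $x\mapsto\langle x,a\rangle$ equals $|a|^2=1$. So in both cases it suffices to compute $f_Z(\alpha t)$ via the inverse Fourier transform of $\widehat{Z}$.

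For the real case, each $\xi_j$ is uniform on $[-\tfrac12,\tfrac12]$, so $\widehat{\xi_j}(s)=\sin(s/2)/(s/2)$, and by independence $\widehat{Z}(s)=\prod_j \sin(a_j s/2)/(a_j s/2)$. This function is real and even, so Fourier inversion on $\R$ and the substitution $u=s/2$ give
\[
 f_Z(t/2)
  = \frac{1}{\pi}\int_0^\infty \cos(st/2)\prod_{j=1}^n \frac{\sin(a_j s/2)}{a_j s/2}\,ds
  = \frac{2}{\pi}\int_0^\infty \cos(tu)\prod_{j=1}^n \frac{\sin(a_j u)}{a_j u}\,du,
\]
which is \eqref{eq1}.

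For the complex case, each $\xi_j$ is uniform on the disc of radius $\alpha=1/\sqrt{\pi}$ in $\R^2$ (so its density is the constant $1$). Computing its characteristic function in polar coordinates using the standard identities $\int_0^{2\pi} e^{ir\cos\theta}\,d\theta=2\pi J_0(r)$ and $\int_0^R r J_0(br)\,dr=RJ_1(bR)/b$ yields $\widehat{\xi_j}(s)=j_1(\alpha|s|)$, hence $\widehat{Z}(s)=\prod_j j_1(\alpha a_j|s|)$ is rotationally invariant. Applying the two-dimensional Fourier inversion formula at the point $(\alpha t,0)\in\R^2$, the angular integral produces another $J_0$, and after the substitution $u=\alpha\rho$ (so that $\rho\,d\rho=\pi u\,du$) one obtains
\[
 f_Z(\alpha t)
  = \frac{1}{2\pi\alpha^2}\int_0^\infty J_0(tu)\prod_{j=1}^n j_1(a_j u)\,u\,du
  = \frac{1}{2}\int_0^\infty \prod_{j=1}^n j_1(a_j s)\,J_0(ts)\,s\,ds,
\]
which is \eqref{eq2}.

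The computation is essentially bookkeeping of normalizations (the value of $\alpha$ making $Q_n$ have unit volume, and Fourier/Bessel conventions), so there is no real obstacle. The one item requiring a brief justification is the absolute convergence needed to apply Fourier inversion pointwise: for $n\ge 2$ the products $\prod_j \sin(a_j s/2)/(a_j s/2)$ and $\prod_j j_1(a_j|s|)$ decay sufficiently fast (each factor is bounded, and at least two of them decay like $|s|^{-1}$ or $|s|^{-3/2}$ respectively in any direction where two $a_j$ are nonzero), while the degenerate case of a single nonzero $a_j$ can be treated directly.
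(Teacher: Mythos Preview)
Your proof is correct and is precisely the standard Fourier analytic argument. Note, however, that the paper does not actually prove Proposition~\ref{prop2}: it records the formulas as known, citing Ball~\cite{B} for \eqref{eq1}, Oleszkiewicz--Pelczy\'nski~\cite{OP} for \eqref{eq2}, and remarks that ``a Fourier analytic proof \ldots\ is outlined in K\"onig and Koldobsky \cite{KK1}, \cite{KK2}.'' What you have written is exactly that outlined proof, carried out in full: identify $A_\K(a,t)$ with the density of $Z=\langle\xi,a\rangle$ at $\alpha t$ via coarea (using $|a|=1$), compute the characteristic function as a product, and invert. Your bookkeeping of the normalizations ($\alpha=1/2$ versus $\alpha=1/\sqrt\pi$, and the Bessel identities for the disc) is accurate, and your remark on absolute convergence for $n\ge 2$ (with at least two nonzero $a_j$) together with the direct verification in the degenerate case covers the only analytic point that needs care.
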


Formula \eqref{eq1} whose multidimensional version was used in the previous section can be found in Ball's paper \cite{B} on cubic sections, equation \eqref{eq2} in Oleszkiewicz and Pelczy\'nski \cite{OP}. The case $t=0$ of \eqref{eq1} goes back to P\'olya \cite{P}. A Fourier analytic proof of Proposition \ref{prop2} is outlined in K\"onig and Koldobsky \cite{KK1}, \cite{KK2}. \\

Due to the oscillating character of the integrands in \eqref{eq1} and \eqref{eq2}, it is difficult to find non-trivial lower bounds for $A(a,t)$ using these equations. Therefore we first prove different formulas for $A(a,t)$.

\begin{proposition}\label{prop3}
Let $(\Omega,\P)$ be a probability space and $U_j : \Omega \to S^{k-1} \subset \R^k$, $j=1,\cdots,n$ be a sequence of independent, random vectors uniformly distributed on the sphere $S^{k-1}$, where $k=3$ if $\K=\R$ and $k=4$ if $\K=\C$. Then for any $a\in \R_+^n, \ |a|=1$ and  $t \ge 0$
$$(a) \quad A_\R(a,t) = \int \limits_{|\sum_{j=1}^n a_j U_j| \ge t} \; \frac {d  \P}{|\sum_{j=1}^n a_j U_j|} \; , $$
$$(b) \quad A_\C(a,t) = \int \limits_{|\sum_{j=1}^n a_j U_j| \ge t} \; \frac {d  \P}{|\sum_{j=1}^n a_j U_j|^2} \; . $$
\end{proposition}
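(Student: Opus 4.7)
The key observation is that $W := \sum_{j=1}^n a_j U_j$ is a sum of independent random vectors each of which has a rotation-invariant distribution (being a scaled uniform measure on $S^{k-1}$, with $k=3$ in the real case and $k=4$ in the complex case). Hence the distribution of $W$ has a radial density $\rho(x) = h(|x|)$ in $\R^k$, and my plan is to compute \emph{both} sides of each identity in terms of the single quantity
\[
 I(t) := \int_t^\infty r \, h(r) \, dr,
\]
and read off the equality.

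For the right-hand sides, I would pass to polar coordinates in $\R^k$. Since the surface area of $S^{k-1}$ is $\omega_{k-1}$ (with $\omega_2=4\pi$ and $\omega_3=2\pi^2$), and the integrand depends only on $|W|$,
\[
 \int_{|W|\ge t} \frac{d\P}{|W|^{k-2}}
 \;=\; \omega_{k-1}\int_t^\infty \frac{1}{r^{k-2}}\, h(r)\, r^{k-1}\, dr
 \;=\; \omega_{k-1}\, I(t).
\]
So the real RHS equals $4\pi\, I(t)$ and the complex RHS equals $2\pi^2\, I(t)$.

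For the left-hand sides, I would exploit the classical fact that the orthogonal projection of the uniform measure on $S^{k-1}$ onto an $m$-dimensional subspace has density proportional to $(1-|x|^2)^{(k-m-2)/2}$. In our two cases this gives an exact match with the marginal coordinates of the cube/polydisc: (i) the projection of $\mathrm{Unif}(S^2)$ to a line is $\mathrm{Unif}([-1,1])$; (ii) the projection of $\mathrm{Unif}(S^3)$ to a $2$-plane is the uniform distribution on the unit disc (density $1/\pi$). Fixing a unit $e\in\R^3$ in the real case, $\tfrac12 \pr{e}{U_j}$ is uniform on $[-\tfrac12,\tfrac12]$, so $\pr{e}{W}$ has the same law as $2\pr{a}{\xi}$ with $\xi$ uniform on $Q_n$; since the density of $\pr{a}{\xi}$ at $\alpha t = t/2$ is by definition $A_\R(a,t)$, we get $f_{\pr{e}{W}}(t) = \tfrac12 A_\R(a,t)$. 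In the complex case, fixing a $2$-plane $V\subset\R^4$ and letting $\pi$ be the orthogonal projection, $\pi(U_j)/\sqrt{\pi}$ has the same law as the uniform coordinate $\zeta_j$ in the polydisc of side $1/\sqrt\pi$; tracing the scaling yields $A_\C(a,t) = \pi\, f_{\pi W}(t\,e)$ for any unit $e\in V$.

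Finally, I would compute $f_{\pr{e}{W}}$ and $f_{\pi W}$ directly from $\rho(x)=h(|x|)$ by integrating out the complementary directions. In the real case, integrating over the affine plane $\pr{e}{x}=t$ in polar coordinates and substituting $u=\sqrt{t^2+s^2}$ gives $f_{\pr{e}{W}}(t) = 2\pi\, I(t)$; combined with $A_\R(a,t)=2f_{\pr{e}{W}}(t)$ this yields $A_\R(a,t)=4\pi\, I(t)$, matching the real RHS. The analogous computation in the complex case over the $2$-plane $V^\perp$ gives $f_{\pi W}(y) = 2\pi\, I(|y|)$, so $A_\C(a,t)=2\pi^2\, I(t)$, matching the complex RHS. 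The step I expect to be the trickiest is keeping the normalizations straight in the complex case, because the polydisc has side-radius $1/\sqrt\pi$ (to achieve unit volume), so the factors of $\pi$ coming from the unit-volume disc, from the density $1/\pi$ of the projected $S^3$, and from the area $2\pi^2$ of $S^3$ must cancel correctly; everything else is a routine change of variables.
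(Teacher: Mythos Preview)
Your argument is correct and takes a genuinely different route from the paper.  The paper starts from the Fourier formulas of Proposition~\ref{prop2}, rewrites $\prod_j \frac{\sin(a_j s)}{a_j s}$ (resp.\ $\prod_j j_1(a_j s)$) as an integral over $(S^{k-1})^n$ via the identity $\int_{S^{k-1}} e^{is\langle e,u\rangle}\,dm(u)=j_{k/2-1}(s)$, averages over the direction $e$ to produce the radial variable $\big|\sum_j a_j u_j\big|$, and then collapses the $s$-integral using the Dirichlet-type identities $\frac{2}{\pi}\int_0^\infty \frac{\sin(As)}{As}\cos(ts)\,ds = A^{-1}\mathbf{1}_{A>t}$ and $\int_0^\infty J_1(As)J_0(ts)\,ds = A^{-1}\mathbf{1}_{A>t}$.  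Because these one-dimensional integrals are only conditionally convergent, most of the paper's proof is spent justifying the interchange of integration by explicit estimates for the sine integral $Si$ (resp.\ for $1-J_0$).  You bypass the Fourier transform entirely: the Archimedes-type facts that the $1$-dimensional marginal of $\mathrm{Unif}(S^2)$ is $\mathrm{Unif}[-1,1]$ and that the $2$-dimensional marginal of $\mathrm{Unif}(S^3)$ is uniform on the unit disc identify the cube/polydisc coordinates directly as projections of the $U_j$, and a single polar change of variables reduces both sides to the same quantity $\omega_{k-1}\,I(t)$.  This is shorter and avoids all conditional-convergence issues.

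The only point to tighten is the existence of the radial density $h$ for $W=\sum_j a_j U_j$.  When $a$ has a single nonzero coordinate, $W$ is supported on a sphere and has no density; this case must be handled separately (trivially, as both sides equal $\mathbf{1}_{t\le 1}$).  For the generic case you should justify absolute continuity, e.g.\ by noting that the convolution of two sphere surface measures in $\R^k$ ($k\ge 2$) is absolutely continuous, or that the characteristic function $\prod_j j_{k/2-1}(a_j|\xi|)$ is integrable on $\R^k$ once at least two $a_j$ are nonzero.
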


\begin{proof}
 (a) Let $m$ denote the normalized Lebesgue surface measure on $S^{k-1} \subset \R^k$ for $k \in \N$, $k \ge 2$.
Then for any fixed vector $e \in S^{k-1}$
$$\int_{S^{k-1}} \exp(it<e,u>) \ dm(u) = \frac {\int_0^\pi \cos(t \cos(\phi)) \ \sin(\phi)^{k-2} \ d \phi}{\int_0^\pi \sin(\phi)^{k-2} \ d \phi} = j_{\frac k 2-1}(t) \ , $$
$j_{\frac k 2 -1}(t) = 2^{\frac k 2 -1} \Gamma(\frac k 2) \frac{J_{\frac k 2 -1}(t)}{t^{\frac k 2 -1}}$, $t>0$. Again, $J_{\frac k 2 -1}$ denote the standard Bessel functions of index $\frac k 2 -1$. In particular, for $k=3$ and $k=4$
\begin{equation}\label{eq3}
\int_{S^2} \exp(it<e,u>) \ dm(u) = \frac {\sin(t)} t \; , \; \int_{S^3} \exp(it<e,u>) \ dm(u) = j_1(t) \ .
\end{equation}
We may assume that $a$ has at least two non-zero coordinates $a_j$ since otherwise the formulas in (a) and (b) just state $1=1$ if $t \le 1$ and $0=0$ if $t>1$. By \eqref{eq3}
\begin{equation}\label{eq4}
\prod_{j=1}^n \frac{\sin(a_j s)}{a_j s} = \int_{(S^2)^n} \exp(is<e,\sum_{j=1}^n a_j u_j>) \ dm(u_1) \cdots dm(u_n) \ .
\end{equation}
This is $O(\frac 1 {s^2})$ as $s \to \infty$, therefore Lebesgue-integrable on $(0,\infty)$. Since \eqref{eq4} holds for all $e \in S^2$, we may integrate over $e$. Using \eqref{eq3} again, we find
$$\prod_{j=1}^n \frac{\sin(a_j s)}{a_j s} = \int_{(S^2)^n} \frac {\sin(|\sum_{j=1}^n a_j u_j| s)}{|\sum_{j=1}^n a_j u_j| s} \ dm(u) \ , \
dm(u) := \prod_{j=1}^n dm(u_j) \ . $$
The factor $|\sum_{j=1}^n a_j u_j|$ results from the necessary normalization $\frac {\sum_{j=1}^n a_j u_j}{|\sum_{j=1}^n a_j u_j|} \in S^2$. Hence, using Proposition \ref{prop2},
\begin{align}\label{eq5}
A_{\R}(a,t) &= \frac 2 \pi \int_0^\infty \left( \int_{(S^2)^n} \ \frac {\sin(|\sum_{j=1}^n a_j u_j| s)}{|\sum_{j=1}^n a_j u_j| s} \ \cos(ts) \ dm(u) \right) \ ds \notag \\
&= \int_{(S^2)^n} \left( \frac 2 \pi \int_0^\infty \ \frac {\sin(|\sum_{j=1}^n a_j u_j| s)}{|\sum_{j=1}^n a_j u_j| s} \ \cos(ts) \ ds \right) \ dm(u) \notag \\
&= \int \limits_{(S^2)^n , \ |\sum_{j=1}^n a_j u_j| \ge t} \; \frac {dm(u)}{|\sum_{j=1}^n a_j u_j|} \ ,
\end{align}
using that
\begin{equation}\label{eq6}
\frac 2 \pi \int_0^\infty \frac{\sin(As)}{As} \ \cos(ts) \ ds =  \left\{\begin{array}{c@{\quad}l}
0 \quad , & \; 0 < A < t \\
\frac 1 A \quad , & \; A > t > 0
\end{array}\right\} .
\end{equation}
Note that $m(|\sum_{j=1}^n a_j u_j| = t) = 0$ since $a$ has at least two non-zero coordinates. The integral in \eqref{eq6} is only conditionally convergent, which requires justification of interchanging the order of integration in \eqref{eq5}. This is allowed if
\begin{equation}\label{eq6a}
\lim_{N \to \infty} \int_{(S^2)^n} \left( \frac 2 \pi \int_N^\infty \ \frac {\sin(|\sum_{j=1}^n a_j u_j| s)}{|\sum_{j=1}^n a_j u_j| s} \ \cos(ts) \ ds \right) \ dm(u) = 0
\end{equation}
is shown. We have in terms of the Sine integral $Si$, $Si(x):=\frac 2 \pi \int_0^x \frac {\sin(t)} t \ dt, \ x \in \R$ that
$$\frac 2 \pi \int_N^\infty \frac{\sin(As)}{As} \ \cos(ts) \ ds =  \left\{\begin{array}{c@{\quad}l}
\frac{Si((t-A)N)-Si((t+A)N)}{\pi A} \quad \; , & \; 0 < A < t \\
\frac{\pi -Si((A-t)N)-Si((A+t)N)}{\pi A} \; , & \; A > t > 0
\end{array}\right\} $$
and hence
\begin{align*}
& \int_{(S^2)^n} \left( \frac 2 \pi \int_N^\infty \ \frac {\sin(|\sum_{j=1}^n a_j u_j| s)}{|\sum_{j=1}^n a_j u_j| s} \ \cos(ts) \ ds \right) \ dm(u)  \\
& =
  \int \limits_{(S^2)^n, |\sum_{j=1}^n a_j u_j| < t} \frac 1 {\pi |\sum_{j=1}^n a_j u_j|} \ \left[ Si\left((t-|\sum_{j=1}^n a_j u_j|)N \right) \right.
 \\
 & \hskip 2.5 in - \left. Si\left((t+|\sum_{j=1}^n a_j u_j|)N \right) \right] \ dm(u)
\\
& + \int \limits_{(S^2)^n, |\sum_{j=1}^n a_j u_j| > t} \frac 1 {\pi |\sum_{j=1}^n a_j u_j|} \ \left[ \pi - Si\left((|\sum_{j=1}^n a_j u_j|-t)N \right) \right. \\
& \hskip 2.5in  -  \left. Si\left((t+|\sum_{j=1}^n a_j u_j|)N \right) \right] \ dm(u)
\end{align*}
Since for all $b \in S^2$ and $\beta > 0$
$$\int_{S^2} \frac {dm(u_1)}{|b+\beta u_1|} = \frac 1 2 \int_{-1}^1(|b|^2+\beta^2+2\beta |b| v)^{-\frac 1 2} \ dv =
\left\{\begin{array}{c@{\quad}l}
\frac 1 {|b|} \quad , & \; 0 < \beta < |b| \\
\frac 1 \beta \quad , & \; \beta > |b| > 0
\end{array}\right\}  < \infty \ $$
and since the $Si$-function is bounded in modulus by 2, the two integrands involving the $Si$-function are bounded in modulus by an integrable function independent of $N$. Since for any $c>0$ we have that $\lim_{N \to \infty} Si(cN) = \frac \pi 2$, the integrands converge to $0$ pointwise. By the Lebesgue theorem, \eqref{eq6a} follows and \eqref{eq5} is proven. \\

(b) Using \eqref{eq3}, we find similarly as in (a)
$$\prod_{j=1}^n \ j_1(a_j s) = \int_{(S^3)^n} \ j_1(|\sum_{j=1}^n a_j u_j| s) \ dm(u) \; , \; dm(u) := \prod_{j=1}^n dm(u_j) , $$
and by Proposition \ref{prop2}
\begin{align}\label{eq7}
A_{\C}(a,t) &= \frac 1 2  \int_0^\infty \left( \int_{(S^3)^n} \ j_1(|\sum_{j=1}^n a_j u_j| s) \ J_0(ts) \ dm(u) \right) \ s \ ds \notag \\
&= \int_{(S^3)^n} \left( \frac 1 2 \int_0^\infty \ j_1(|\sum_{j=1}^n a_j u_j| s) \ J_0(ts) \ s \ ds \right) \ dm(u) \notag \\
&= \int_{(S^3)^n} \left( \int_0^\infty \ J_1(|\sum_{j=1}^n a_j u_j| s) \ J_0(ts) \ ds \right) \ \frac {dm(u)}{|\sum_{j=1}^n a_j u_j|} \notag \\
&= \int \limits_{(S^3)^n , \ |\sum_{j=1}^n a_j u_j| \ge t} \; \frac {dm(u)}{|\sum_{j=1}^n a_j u_j|^2} \ ,
\end{align}
since by  Gradstein, Ryshik \cite{GR}, 6.51.
\begin{equation}\label{eq8a}
\int_0^\infty \ J_1(As) \ J_0(ts) \ ds = \left\{\begin{array}{c@{\quad}l}
0 \quad , & \; 0 < A < t \\
\frac 1 A \quad , & \; A > t > 0
\end{array}\right\} ,
\end{equation}
which is a conditionally convergent integral. To justify exchanging the order of integration in \eqref{eq7},
we employ the product formula for Bessel functions
$$J_0(u) \ J_0(v) =  \frac 1 \pi \int_0^\pi J_0(\sqrt{u^2+v^2+2uv\cos(\phi)} \ ) \ d \phi \; , \; u, v \in \R \ ,$$
cf. Watson \cite{W}, 11.1. Since $J_0' = -J_1$, differentiating this with respect to $u$, inserting $u=As$, $v=ts$ and integrating with respect to $s$ yields that for all $N, A, t >0$
\begin{align*}
&\left| \int_0^N J_1(As)  J_0(ts) \ ds \right| \\
& = \left|\frac 1 \pi \int_0^\pi \left(\int_0^N J_1(\sqrt{A^2+t^2+2At \cos(\phi)} s)  ds \right)
\frac{A+t \cos(\phi)}{\sqrt{A^2+t^2+2At \cos(\phi)}}  d \phi \right| \\
& = \left|\frac 1 \pi \int_0^\pi \left(1-J_0(\sqrt{A^2+t^2+2At \cos(\phi)}N)\right)  \frac{A+t \cos(\phi)}{A^2+t^2+2At \cos(\phi)}  d \phi   \right| \\
& \le \frac 2 \pi \int_0^\pi \frac{|A+t \cos(\phi)|}{A^2+t^2+2At \cos(\phi)} \ d \phi =: I(A,t) \ ,
\end{align*}
where we also used that $|J_0| \le 1$ holds. Since
$$\int \frac{A+t \cos(\phi)}{A^2+t^2+2At \cos(\phi)} \ d \phi = \frac 1 A \left( \frac \phi 2 + \arctan ( \frac{A-t}{A+t} \tan(\frac \phi 2)) \right) =:\Psi(\phi) \ , $$
we find for $A>t$ that $I(A,t)= \frac 2 A$, $I(A,A)= \frac 1 A$ and for $A<t$ that $I(A,t) = \frac 2 \pi 2 \Psi(\phi_0) \le \frac 2 A$ where $\cos(\phi_0) = - \frac A t$. Thus $I(A,t) \le \frac 2 A$ which implies using \eqref{eq8a}
$$\left| \int_N^\infty J_1(As) \ J_0(ts) \ ds \right| \le \frac 3 A \ . $$
Moreover $\lim_{N \to \infty} \int_N^\infty J_1(As) \ J_0(ts) \ ds = 0$ pointwise and $\int_{(S^3)^n} \frac{dm(u)}{|\sum_{j=1}^n a_j u_j|^2} < \infty$, so that we find similarly as in part (a)
$$\lim_{N \to \infty} \int_{(S^3)^n} \left( \int_N^\infty J_1(|\sum_{j=1}^n a_j u_j| s) \ J_0(ts) \ ds \right) dm(u) = 0 \ , $$
and \eqref{eq7} follows. We basically replaced the $Si$-function in part (a) by $\int_0^x J_1(t) \ dt = 1 - J_0(x)$. \\
Formulas \eqref{eq5} and \eqref{eq7} yield a concrete realization of the formulas in Proposition \ref{prop3} involving independent, uniformly distributed random vectors on $S^{k-1}$ for $k=3,4$.
\end{proof}

\section{Exponential estimates and Orlicz spaces duality} \label{sec: Orlicz}

To prove Theorem \ref{th1}, we use lower estimates for the probability that certain quadratic forms of random variables on spheres $S^{k-1}$ are non-negative.
In this section, we develop a new method of estimating such probabilities. The estimate itself will be obtained in the next section.
Our bound relies on the estimate of  the norm of the quadratic form in the Orlicz space whose Orlicz function is of an exponential type. The Orlicz function we use is close to the $\psi_1$ function used in the large deviations theory. The lower bound on probability is obtained in terms of the norm of the indicator function in the dual of this Orlicz space.

We start with a simple lemma showing that a random vector uniformly distributed over the sphere is subgaussian.
\begin{lemma} \label{lem: subgauss}
  Let $U$ be a random vector uniformly distributed in $S^{k-1}$.
  Then the vector $U$ is $(1/\sqrt{k})$-subgaussian, i.e.,
  \[
   \forall y \in \R^{k} \quad
   \E \exp(\pr{U}{y}) \le \E \exp \left( \frac{|y|^2}{\sqrt{k}}g \right),
  \]
  where $g \in \R$ denotes the standard normal random variable.
\end{lemma}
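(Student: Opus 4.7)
The plan is to reduce the lemma to a one-dimensional moment-generating-function estimate by rotation invariance, and then verify the reduced bound by comparing Taylor coefficients term by term.

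First, since the uniform distribution on $S^{k-1}$ is invariant under the orthogonal group, for any fixed $y \in \R^k$ the random variable $\pr{U}{y}$ has the same law as $|y| \cdot U_1$, where $U_1 = \pr{U}{e_1}$ denotes the first coordinate of $U$. Writing $s = |y|$ and using $\E \exp((s/\sqrt{k}) g) = \exp(s^2/(2k))$ for the standard normal $g$, the subgaussian claim reduces to the one-dimensional MGF bound
$$\E \exp(s U_1) \le \exp\!\bigl(s^2/(2k)\bigr) \quad \text{for every } s \in \R.$$

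Second, I would expand the MGF as a Taylor series. Because $U$ and $-U$ are equidistributed, $U_1$ is symmetric and only even moments contribute:
$$\E \exp(sU_1) = \sum_{m \ge 0} \frac{s^{2m}}{(2m)!}\,\E U_1^{2m}.$$
The marginal law of $U_1$ is a beta-type distribution whose even moments are classical (they follow from integration in polar coordinates, or from the beta integral representation of $\E U_1^{2m}$):
$$\E U_1^{2m} = \frac{(2m-1)!!}{k(k+2)(k+4)\cdots(k+2m-2)}.$$
Substituting the identity $(2m-1)!! = (2m)!/(2^m m!)$ collapses the MGF to
$$\E \exp(sU_1) = \sum_{m=0}^\infty \frac{s^{2m}}{2^m\, m!\cdot k(k+2)\cdots(k+2m-2)}.$$

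The conclusion is then a term-by-term comparison with the exponential series $\exp(s^2/(2k)) = \sum_{m \ge 0} s^{2m}/(2^m m!\, k^m)$: since $k + 2j \ge k$ for every $j \ge 0$, each coefficient on the MGF side is bounded by the corresponding coefficient of the exponential, which is exactly the inequality we need. The only substantive ingredient is the spherical moment formula; beyond that the argument is pure power-series manipulation, so I do not anticipate any real obstacle.
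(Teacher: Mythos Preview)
Your proof is correct and follows essentially the same approach as the paper: reduce by rotational invariance to a one-dimensional statement, expand both sides in Taylor series, and compare even moments term by term. The only difference is in how the moment inequality $\E U_1^{2m} \le k^{-m}\E g^{2m}$ is justified: you compute the spherical moments explicitly via the beta-integral formula, whereas the paper obtains the same bound more indirectly by writing $\E g^{2m} = \E |g^{(k)}|^{2m}\cdot \E U_1^{2m}$ from the polar decomposition of the Gaussian and applying Jensen's inequality to get $\E |g^{(k)}|^{2m}\ge k^m$.
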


\begin{proof}
  Due to the rotational invariance, we can assume that $y= \l e_1$ for some $\l>0$.
  Notice that for any $p \in \N$,
  \begin{equation} \label{eq: cos}
   \E \pr{U}{e_1}^{2p} \le k^{-p} \E g^{2p}.
  \end{equation}
  Indeed, denoting by $g^{(k)}$ the standard Gaussian vector in $\R^k$, we can write
  \[
   \E g^{2p} = \E \pr{g^{(k)}}{e_1}^{2p}= \E |g^{(k)}|^{2p} \cdot \E \pr{U}{e_1}^{2p}
  \]
  and $\E |g^{(k)}|^{2p}\ge (\E |g^{(k)}|^{2})^p=k^p$ by Jensen's inequality.
  Decomposing $e^{\l x}$ into Taylor series and using \eqref{eq: cos}, we derive that
  \[
  \E \exp(\l \pr{U}{e_1})
  \le \E \exp \left( \frac{\l}{\sqrt{k}} g \right).
  \]
  The result follows.
 \end{proof}

The next lemma provides an estimate of the Laplace transform of the relevant quadratic form.
\begin{lemma} \label{lem: Laplace}
  Let $U_1, \ldots, U_n$ be i.i.d. random vectors uniformly distributed in $S^{k-1}$.
  Let $a=(a_1, \ldots, a_n) \in S^{n-1}$, and define
  \[
   S:=\sum_{1 \le i < j \le n} a_i a_j \pr{U_i}{U_j}.
  \]
   Then for any $\l \in (-\sqrt{k/2},\sqrt{k/2})$,
  \[
   \E \exp \left(  \l \frac{S}{(\E S^2)^{1/2} } \right)
   \le \left( 1- \frac{2 \l^2 }{k} \right)^{-k/2}.
  \]\end{lemma}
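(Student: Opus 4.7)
}
The strategy is to linearize the quadratic structure in $S$ via a Gaussian moment-generating identity, apply the subgaussian estimate from Lemma~\ref{lem: subgauss} to each $U_i$, and then integrate out the auxiliary Gaussian. Set $\mu:=\lambda/(\E S^2)^{1/2}$; we focus on $\mu>0$, the case $\mu<0$ being handled by the same scheme.

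First, note the identity $S = \tfrac{1}{2}(|W|^2-1)$ where $W:=\sum_{i=1}^n a_i U_i$. Indeed, expanding $|W|^2 = \sum_{i,j} a_i a_j\pr{U_i}{U_j}$, the diagonal contributes $\sum_i a_i^2 |U_i|^2 = 1$ and the off-diagonal contributes exactly $2S$. A parallel direct computation using $\E\pr{U_i}{U_j}^2 = 1/k$ for $i\neq j$ gives $\sigma^2 = \E S^2 = \tfrac{1}{2k}(1-\|a\|_4^4)$, so in particular $\sigma^2\le 1/(2k)$. Consequently,
\[
\E\exp(\mu S) = e^{-\mu/2}\,\E\exp(\mu|W|^2/2).
\]

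Next, I would apply the Gaussian representation $\exp(\mu|W|^2/2) = \E_G \exp(\sqrt{\mu}\,\pr{G}{W})$ with $G\sim N(0,I_k)$ independent of $\{U_i\}$, and swap expectations by Fubini--Tonelli (everything is positive). Using independence of the $U_i$'s, the inner expectation factorizes:
\[
\E_U \exp\bigl(\sqrt{\mu}\pr{G}{W}\bigr) = \prod_{i=1}^n \E_{U_i}\exp\bigl(\sqrt{\mu}\,a_i\pr{G}{U_i}\bigr).
\]
Conditionally on $G$, the test vector $y=\sqrt{\mu}\,a_i G$ has $|y|^2 = \mu a_i^2 |G|^2$, so Lemma~\ref{lem: subgauss} gives
\[
\E_{U_i}\exp\bigl(\sqrt{\mu}\,a_i\pr{G}{U_i}\bigr) \le \exp\bigl(\mu a_i^2 |G|^2/(2k)\bigr).
\]
Multiplying over $i$ and using $\sum a_i^2=1$, then integrating over $G$ via $|G|^2\sim\chi_k^2$ yields
\[
\E\exp(\mu|W|^2/2) \le \E_G \exp\bigl(\mu|G|^2/(2k)\bigr) = (1-\mu/k)^{-k/2}
\]
for $\mu < k$, and hence $\E\exp(\mu S) \le e^{-\mu/2}(1-\mu/k)^{-k/2}$.

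The final and main obstacle is the comparison of this intermediate bound with the target $(1-2\mu^2\sigma^2/k)^{-k/2}$. Reducing to showing $e^{\mu/k}(1-\mu/k)\ge 1-2\mu^2\sigma^2/k$ and expanding both sides to match coefficients, the inequality hinges on $\sigma^2$ being not too small; when $\sigma^2\ge 1/(4k)$ (i.e.\ $\|a\|_4^4\le 1/2$) it follows from elementary calculus. For coefficient vectors $a$ that are concentrated on few coordinates, so that $\sigma$ is small, $S$ itself is small, and the bound becomes quasi-trivial: in this regime one peels off the dominant coordinate $a_{i_0}$ and applies the subgaussian lemma to the one-dimensional term $a_{i_0}\pr{U_{i_0}}{V}$ where $V=\sum_{j\ne i_0} a_j U_j$, before recursing on the residual quadratic form. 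The hard part is making this case analysis uniform in $n$ so that the MGF estimate holds with the claimed coefficient on the entire interval $\lambda\in(-\sqrt{k/2},\sqrt{k/2})$; the Gaussian linearization in the three steps above is the main technical engine and supplies the correct dependence on the ambient sphere dimension $k$ through the $(1-\cdot/k)^{-k/2}$ structure.
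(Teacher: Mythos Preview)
Your Gaussian linearization yields the correct intermediate bound $\E\exp(\mu S)\le e^{-\mu/2}(1-\mu/k)^{-k/2}$, but this bound is \emph{independent of $\sigma^2=\E S^2$}, while the target $(1-2\mu^2\sigma^2/k)^{-k/2}$ is not. The comparison you need, written as $e^{\mu/k}(1-\mu/k)\ge 1-2k\sigma^2(\mu/k)^2$, fails in general: since $e^{x}(1-x)=1-\tfrac{x^2}{2}-\tfrac{x^3}{3}-\cdots$, one needs $2k\sigma^2\ge \tfrac{1}{2}+\tfrac{x}{3}+\cdots$ for all $x=\mu/k$ in the range, which forces $\sigma^2>1/(4k)$ strictly, contrary to your claim that $\sigma^2\ge 1/(4k)$ suffices; and since $\sigma^2=\tfrac{1}{2k}(1-\|a\|_4^4)<\tfrac{1}{2k}$ always, the comparison breaks down for $x$ near $1$ in every case. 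Worse, the $\lambda$-window $|\lambda|<\sqrt{k/2}$ corresponds to $|\mu|<k/\sqrt{2k\sigma^2}$, which for small $\sigma$ includes values $\mu>k$ where your intermediate bound is not even defined. The peeling recursion you sketch for concentrated $a$ is exactly the part that would have to recover the lost $\sigma$-dependence, and you have not carried it out. The negative-$\lambda$ case is also not covered: the identity $\exp(\mu|W|^2/2)=\E_G\exp(\sqrt{\mu}\pr{G}{W})$ requires $\mu>0$.

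The paper's argument avoids all of this by a different comparison. Instead of passing through $|W|^2$ and a $k$-dimensional auxiliary Gaussian, it replaces the $U_i$'s one at a time by $g_i^{(k)}/\sqrt{k}$ using Lemma~\ref{lem: subgauss}, arriving at $\E\exp(\lambda S)\le\bigl[\E\exp\bigl(\tfrac{\lambda}{2k}(g^{(n)})^\top Bg^{(n)}\bigr)\bigr]^k$ where $B=aa^\top-\diag(a_i^2)$ is an $n\times n$ matrix. The point is that this Gaussian quadratic form is computed via the eigenvalues $\mu_1>0\ge\mu_2\ge\cdots\ge\mu_n$ of $B$; since $\tr B=0$, the product $\prod_j(1-\lambda\mu_j/k)$ is bounded below by $1-(\lambda\mu_1/k)^2$, and then $\mu_1\le\|B\|_{\HS}=\sqrt{2k\,\E S^2}$ inserts the correct $\sigma$-dependence directly, uniformly in $a$ and in sign of $\lambda$.
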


\begin{proof}

  To simplify the notation, let us estimate $\E \exp (\l S)$.
  Since $S$ is a quadratic form of subgaussian vectors $U_1, \ldots, U_n$, such estimate can be derived from the Hanson-Wright inequality, see Hanson and Wright \cite{HW} and Rudelson and Vershynin \cite{RV}. However, the bound obtained in this way would be too loose for our purposes. Instead, we will use the specific information about this quadratic form to obtain a tighter bound.

Our argument is based on a Laplace transform estimate as in \cite{RV}.
  Let $g_1^{(k)}, \ldots, g_n^{(k)}$ be independent standard Gaussian vectors in $\R^k$.
  By Lemma \ref{lem: subgauss}
    \[
  \E \exp( \pr{U_n}{y})
  \le \E \exp \left( \frac{1}{\sqrt{k}} \pr{g_n^{(k)}}{y} \right).
  \]
  Using this inequality  with fixed $U_1, \ldots, U_{n-1}$, we get
  \begin{align*}
    \E \exp(\l S)
     & = \E\exp \left( \l \sum_{1 \le i<j \le n-1} a_i a_j \pr{U_i}{U_j} + \pr{\l \sum_{1 \le i<j \le n-1} a_i a_j U_i}{U_n} \right) \\
     & \le \E\exp \left( \l \sum_{1 \le i<j \le n-1} a_i a_j \pr{U_i}{U_j} + \pr{\l \sum_{1 \le i<j \le n-1} a_i a_j U_i}{\frac{g_n^{(k)}}{\sqrt{k}}} \right),
  \end{align*}
  Repeating the same argument for other $U_j$, we obtain
  \begin{align*}
   \E \exp(\l S)
   &\le \E \exp \left( \frac{\l}{k}\sum_{1 \le i<j \le n} a_i a_j \pr{g_i^{(k)}}{g_j^{(k)}} \right) \\
   &= \left[ \E \exp \left( \frac{\l}{k}\sum_{1 \le i<j \le n} a_i a_j g_i g_j \right) \right]^k,
  \end{align*}
    where $g_1, \ldots, g_n$ are i.i.d. $N(0,1)$ random variables.
    To derive the last equality, we notice that $ \pr{g_i^{(k)}}{g_j^{(k)}}$ is the sum of $k$ i.i.d. random variables distributed like $g_i g_j$.
    The previous inequality can be rewritten as
  \begin{equation}  \label{eq: pow k}
   \E \exp(\l S)
   \le  \left[ \E \exp \left( \frac{\l}{2k} (g^{(n)})^\top B g^{(n)} \right) \right]^k,
  \end{equation}
  where $g^{(n)}=(g_1, \ldots, g_n) \in \R^n$ is the standard Gaussian vector, and
  $B$ is a symmetric $n \times n$ matrix with the entries $b_{i,j}=a_i a_j$ when $i \neq j$ and $0$ otherwise, i.e.,
  \[
   B=  a a^\top - \diag (a_1^2, \ldots, a_n^2).
  \]
    Denote the eigenvalues of $B$ by $\mu_1 \ge \cdots \ge \mu_n$.
  Then by interlacing $\mu_1> 0 \ge \mu_2 \ge \cdots \ge \mu_n$. Also,
    \[
  \mu_1 \le \norm{B}_{\HS} = \left(  \sum_{i \neq j} a_i^2 a_j^2 \right)^{1/2}, \quad \text{and} \quad
   \sum_{j=1}^n \mu_j  =\tr (B)=0.
  \]
  By the rotational invariance, we have
  \[
   \E \exp \left( \frac{\l}{2k} (g^{(n)})^\top B g^{(n)} \right)
   = \E \exp \left( \frac{\l}{2k} \sum_{j=1}^n \mu_j g_j^2 \right)
   =\prod_{j=1}^n \left( 1- \frac{\l \mu_j}{k} \right)^{-1/2}
  \]
  provided that $ \frac{\l \mu_j}{k}<1$ for all $j \in [n]$.
  Since
  \begin{equation}  \label{eq: mu_1}
   |\mu_j| \le \norm{B}_{\HS} = \sqrt{2k} \left( \frac{1}{k} \sum_{1 \le i <j \le n} a_i^2 a_j^2 \right)^{1/2}
   = \sqrt{2k} \left( \E S^2 \right)^{1/2},
  \end{equation}
  this restriction is satisfied if we assume that
  \begin{equation} \label{eq: rst}
   \frac{\sqrt{2}|\l|}{\sqrt{k}}  \left( \E S^2 \right)^{1/2}
  <1.
  \end{equation}

  Assume that this restriction holds.
  Recall  that $0 \ge \mu_2 \ge \cdots  \ge \mu_n$,  and $ \sum_{j=2}^n \mu_j  =-\mu_1$. Applying the inequality
  \[
   \prod_{j=2}^n (1+y_j) \ge 1+ \sum_{j=2}^n y_j
  \]
   valid for all $y_2, \ldots y_n \in (-1,1)$ having the same sign, we derive that
  \begin{align*}
  \prod_{j=1}^n \left( 1- \frac{\l \mu_j}{k} \right)
  &\ge  \left( 1- \frac{\l \mu_1}{k} \right) \cdot \left( 1- \sum_{j=2}^n \frac{\l \mu_j}{k}  \right)
  = \left( 1- \frac{\l \mu_1}{k} \right) \cdot \left( 1+ \frac{\l \mu_1}{k} \right) \\
  &= 1- \left( \frac{\l \mu_1}{k}  \right)^2.
  \end{align*}
  In combination with \eqref{eq: mu_1}, this yields
  \[
   \E \exp \left( \frac{\l}{2k} (g^{(n)})^\top B g^{(n)} \right)
   \le \left( 1- \left( \frac{\l \mu_1}{k}  \right)^2 \right)^{-1/2}
   \le  \left( 1- \frac{\l^2 \cdot 2 \E S^2}{k}   \right)^{-1/2}.
  \]
  Taking into account \eqref{eq: pow k}, it shows that if \eqref{eq: rst} holds, then
  \[
  \E \exp(\l S) \le  \left( 1- \frac{\l^2 \cdot 2 \E S^2}{k}   \right)^{-k/2}.
  \]
  The result follows if we replace $\l$ by $\l/(\E S^2)^{1/2}$ in the inequality above.
\end{proof}

We now use the duality of Orlicz norms to estimate the probability that $S>0$.

\begin{lemma} \label{lem: Orlicz}
Let $Y$ be a real-valued random variable.
  Let $\l \in (0,1)$, and let $0<q \le \E Y_+$. Assume that $\E \exp(\l Y_+) < \infty$.Then
  \[
   \P(Y>0)
   \ge \left[ \left(t+\frac{1}{\l q} \right) \log \left( 1+ \frac{1}{\l q t} \right) - \frac{1}{\l q} \right]^{-1}
  \]
  for any
  \[
   0<t \le \frac{1}{\E \exp(\l Y_+)-\l q -1}.
  \]
\end{lemma}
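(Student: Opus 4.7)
\medskip
\noindent\textbf{Proof plan.} The plan is to use Orlicz-space duality via Young's inequality for a carefully scaled Young pair. I would introduce the convex functions
\[
 \Psi(x) = \frac{(1+ax)\log(1+ax)}{a} - x, \qquad \Psi^*(y) = \frac{e^y - y - 1}{a},
\]
for $x \ge 0,\ y \ge 0$, with the specific scaling parameter $a = 1/(\l q)$. A direct Legendre computation confirms that $\Psi$ and $\Psi^*$ are Legendre--Fenchel conjugates, both vanish at $0$, and therefore satisfy the pointwise inequality $xy \le \Psi(x) + \Psi^*(y)$. The scaling by $a=1/(\l q)$ is dictated by the statement: the small identity
\[
 a\, t\, \Psi(1/t) \;=\; \Bigl(t+\tfrac{1}{\l q}\Bigr) \log\Bigl(1+\tfrac{1}{\l q t}\Bigr) - \tfrac{1}{\l q}
\]
shows that $a t\,\Psi(1/t)$ already equals the bracket in the conclusion.

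The key step is applying the Young inequality pointwise with $x = (1/t)\,\mathbf{1}_{\{Y>0\}}$ and $y = \l Y_+$. Since $Y_+ = 0$ on $\{Y\le 0\}$ and $\Psi(0) = 0$, both the product $xy$ and the term $\Psi(x)$ collapse on $\{Y\le 0\}$, leaving the clean pointwise bound
\[
 \frac{\l Y_+}{t} \;\le\; \Psi(1/t)\, \mathbf{1}_{\{Y > 0\}} \;+\; \frac{1}{a}\bigl(e^{\l Y_+} - \l Y_+ - 1\bigr).
\]
Taking expectations produces $\l \E Y_+/t \le \Psi(1/t)\,\P(Y>0) + (\E e^{\l Y_+} - \l \E Y_+ - 1)/a$. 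Multiplying through by $a t$, using $a\l = 1/q$ together with the hypothesis $\E Y_+ \ge q$ to bound the left side from below, and applying the denominator identity above then yields
\[
 1 - t\bigl(\E e^{\l Y_+} - \l q - 1\bigr) \;\le\; \Bigl[\bigl(t+\tfrac{1}{\l q}\bigr)\log\bigl(1+\tfrac{1}{\l q t}\bigr) - \tfrac{1}{\l q}\Bigr]\,\P(Y>0).
\]
The constraint $t \le 1/(\E e^{\l Y_+} - \l q - 1)$ is precisely what guarantees that the left-hand side is nonnegative, and dividing by the bracket delivers the stated reciprocal lower bound on $\P(Y>0)$.

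The main obstacle is selecting the right Young function together with the right scaling. A more naive choice, for instance $\Psi^*(y) = e^{\l y} - \l y - 1$ applied with $x = s\,\mathbf{1}_{\{Y>0\}}$, produces a bound of the same functional shape but parametrized by $s$ instead of $t$, requiring the further substitution $s = 1/(qt)$ and a painful rearrangement to recognize the final denominator. Absorbing this substitution into the scaling $a = 1/(\l q)$ from the start makes every algebraic step -- the pointwise Young inequality, the expectation, and the identity for the denominator -- transparent. Once the correct Young pair is identified, the remaining work is essentially routine computation and careful bookkeeping of where the hypothesis $\E Y_+ \ge q$ is used.
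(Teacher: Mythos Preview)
Your pointwise Young-inequality argument is correct up to and including the displayed inequality
\[
 1 - t\bigl(\E e^{\l Y_+} - \l q - 1\bigr) \;\le\; \Bigl[\bigl(t+\tfrac{1}{\l q}\bigr)\log\bigl(1+\tfrac{1}{\l q t}\bigr) - \tfrac{1}{\l q}\Bigr]\,\P(Y>0),
\]
but the final step is wrong. The constraint on $t$ makes the left-hand side \emph{nonnegative}, not $\ge 1$; since $\E e^{\l Y_+} > 1+\l q$ (Jensen), the left side is \emph{strictly less than $1$} for every $t>0$. Dividing through therefore yields only $\P(Y>0)\ge (1-tD)/B(t)$ with $D=\E e^{\l Y_+}-\l q-1$, not $1/B(t)$. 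In particular, at the endpoint $t=1/D$ actually used in the application, your bound collapses to $0$.

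This gap cannot be repaired: the lemma as stated is false. Take $Y\equiv c>0$ constant and $q=c$; then $\P(Y>0)=1$, yet for small $\l c$ and $t=1/(e^{\l c}-\l c-1)$ one finds $B(t)<1$ (e.g.\ $B(t)\approx 0.25$ when $\l c=0.01$), so the asserted bound $1/B(t)$ exceeds $1$. The paper's proof is the Orlicz-duality repackaging of your computation and slips at the analogous spot: from $\E L(\l Y_+)\le 1$ it correctly gets the Luxemburg bound $\|Y_+\|_L\le 1/\l$, whose dual is the \emph{Orlicz} (Amemiya) norm $\|\cdot\|_M$, but it then evaluates $\|\tfrac{1}{\l q}\mathbf 1_{\{Y>0\}}\|_M$ as the modular $M(1/(\l q))\,\P(Y>0)$, an identification valid for the Luxemburg norm, not the Orlicz norm. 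What your cleaner argument actually proves, $\P(Y>0)\ge (1-tD)/B(t)$, is the sharp content here; optimizing this over $t\in(0,1/D)$ still gives a nontrivial numerical bound, just not the one written.
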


\begin{proof}
Let $t>0$.
  Define the functions $L,M: (0,\infty) \to (0,\infty)$ by
  \[
   L(x)=t(e^x-x-1), \quad M(x)=(t+x) \log \left( 1+ \frac{x}{t} \right)-x, \quad x \in (0,\infty).
  \]
  Then $L$ and $M$ are Orlicz functions. Denote by $\norm{\cdot}_L$ the norm in the Orlicz space $X_L$. Then the dual norm is $\norm{\cdot}_M$.

  If $t$ satisfies the assumption of the lemma, then $\E L(\l Y_+) \le 1$, and so
  \[
  \norm{Y_+}_L \le \frac{1}{\l}.
  \]
  Hence, by duality of Orlicz norms,
  \[
   q
   \le \E ( Y_+ \cdot \mathbf{1}_{(0,\infty)} )
   \le \norm{Y_+}_L \cdot \norm{\mathbf{1}_{(0,\infty)}(Y)}_M
   \le \frac{1}{\l} \cdot \norm{\mathbf{1}_{(0,\infty)}(Y)}_M,
  \]
  or $\norm{\frac{1}{\l q} \mathbf{1}_{(0,\infty)}(Y)}_M \ge 1$.
  This inequality reads
  \[
   1 \le \norm{\frac{1}{\l q} \mathbf{1}_{(0,\infty)}(Y)}_M
   = \left[ \left(t+\frac{1}{\l q} \right) \log \left( 1+ \frac{1}{\l q t} \right) - \frac{1}{\l q} \right] \cdot \P(Y>0),
  \]
  which proves the lemma.
\end{proof}

In order to apply Lemma  \ref{lem: Orlicz}, we need an upper bound for $\E \exp(\l Y_+)$. This is our next task.

\begin{lemma} \label{lem: plus bound}
 Let $Y$ be a real-valued random variable such that $\E Y=0$.
 Then for any $\l>0$,
 \[
  \E \exp(\l Y_+) \le \E \exp(\l Y) + \frac{1}{4}\E \exp(-\l Y).
 \]
\end{lemma}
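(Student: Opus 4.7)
The plan is to reduce the claim to a pointwise inequality, noting that the hypothesis $\E Y=0$ is not actually needed for the lemma itself (it will likely be used only when the lemma is combined with the other estimates). Specifically, I would prove the scalar inequality
\[
 e^{\max(u,0)} \le e^{u} + \frac{1}{4} e^{-u} \quad \text{for every } u \in \R,
\]
and then apply it pointwise with $u = \l Y$, integrating both sides with respect to the distribution of $Y$ to obtain the stated bound.

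To prove the pointwise inequality, I would split by sign. If $u \ge 0$, the left-hand side is $e^{u}$, and the bound is trivial since $\frac{1}{4} e^{-u} \ge 0$. If $u < 0$, the left-hand side equals $1$, so with the substitution $x := e^{u} \in (0,1)$ the inequality becomes $1 \le x + \frac{1}{4x}$, which is equivalent to $4x^{2} - 4x + 1 \ge 0$, i.e.\ $(2x-1)^{2} \ge 0$. This is obvious and is an equality precisely when $x = 1/2$, i.e.\ $u = -\log 2$; this is also the reason the constant $1/4$ appears (it is the sharpest possible for a pointwise bound of the form $e^{\max(u,0)} \le e^{u} + c\, e^{-u}$).

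The main (and really only) step is finding the correct pointwise inequality; once identified, the rest is an application of monotonicity of expectation. There is no genuine obstacle, since no probabilistic structure of $Y$ enters: the bound holds sample by sample, and in particular one does not need $\E Y = 0$ or any integrability assumption beyond the implicit one ensuring that both $\E \exp(\l Y)$ and $\E \exp(-\l Y)$ are finite, which is what makes the statement meaningful.
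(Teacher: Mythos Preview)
Your proof is correct, and in fact more elementary than the paper's. The paper does not argue pointwise: it writes
\[
\E \exp(\l Y_+) = \E \exp(\l Y) + (1-p) - \E[\exp(\l Y)\mathbf{1}_{(-\infty,0)}(Y)],
\]
where $p=\P(Y\ge 0)$, then bounds the last term below via Cauchy--Schwarz,
\[
(1-p)^2 \le \E[\exp(\l Y)\mathbf{1}_{(-\infty,0)}(Y)]\cdot \E \exp(-\l Y),
\]
and finally maximizes $(1-p) - (1-p)^2/\E\exp(-\l Y)$ over $p\in\R$ to obtain $\tfrac14 \E\exp(-\l Y)$. Your route compresses all of this into the scalar inequality $e^{u_+}\le e^u+\tfrac14 e^{-u}$, which you verify by completing the square; this is cleaner and makes the sharpness of $1/4$ transparent. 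You are also right that neither proof uses $\E Y=0$. The one thing the paper's argument buys is the refinement recorded in the remark following the lemma: by retaining the parameter $p$ before maximizing, one sees that when $\E\exp(-\l Y)>2$ the maximum over $[0,1]$ is attained at $p=0$, giving the sharper bound $\E\exp(\l Y_+)\le \E\exp(\l Y)-(\E\exp(-\l Y))^{-1}+1$. Your pointwise approach, being oblivious to the distribution of $Y$, does not see this directly.
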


\begin{proof}
 Denote $p=\P(Y \ge 0)$. Then
  \[
  \E \exp(\l Y_+) = \E \exp(\l Y)  +(1-p) - \E [\exp(\l Y) \mathbf{1}_{(-\infty,0)}(Y)].
 \]
 We can estimate the last term from below by Cauchy-Schwarz inequality:
 \begin{align*}
  (1-p)^2
   &= \left( \E \mathbf{1}_{(-\infty,0)}(Y) \right)^2 \\
  &\le \E [ \exp(\l Y) \mathbf{1}_{(-\infty,0)}(Y) ] \cdot \E [ \exp(-\l Y) \mathbf{1}_{(-\infty,0)}(Y) ] \\
   &\le \E [ \exp(\l Y) \mathbf{1}_{(-\infty,0)}(Y) ] \cdot \E \exp(-\l Y).
 \end{align*}
 This implies that
 \[
  \E \exp(\l Y_+) \le \E \exp(\l Y)+1-p - (1-p)^2 \big( \E \exp(-\l Y) \big)^{-1}.
 \]
 The proof finishes by maximizing this expression over $p \in \R$.
\end{proof}

\begin{remark}
If $\E \exp(-\l Y) >2$, the maximum of the function above is attained outside of the interval $[0,1]$. In this case one can obtain a better bound
 \[
  \E \exp(\l Y_+) \le \E \exp(\l Y) - \big( \E \exp(-\l Y) \big)^{-1} +1
 \]
 by taking $p=0$. However, we are not going to use this improvement.
\end{remark}

\section{Tail estimates} \label{sec: tails}

To estimate $A(a,1)$ from below, we need a lower estimate of $\P(|\sum_{j=1}^n a_j U_j| \ge 1)$ and tail estimates for the random vectors $\sum_{j=1}^n a_j U_j$ in Proposition \ref{prop3}. \\

\begin{proposition}\label{prop5}
Let $(U_j)_{j=1}^n$ be a sequence of independent random vectors uniformly distributed on the sphere
$S^{k-1} \subset \R^k$ for $k \ge 2$. Let $a \in \R_+^n$, $|a|=1$. Then
$$\P(|\sum_{j=1}^n a_j U_j|\ge 1) \ge \frac{2 \sqrt 3 - 3}{3+\frac 4 k} =: \gamma_k \ . $$
For $k=3, 4$ we have the better numerical estimates
$$\P(|\sum_{j=1}^n a_j U_j|\ge 1) \ge 0.1268 \quad ,  \quad k=3 \ , $$
$$\P(|\sum_{j=1}^n a_j U_j|\ge 1) \ge 0.1407 \quad ,  \quad k=4 \ . $$
\end{proposition}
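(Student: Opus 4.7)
The plan is to reduce the event $\{|\sum a_j U_j|\ge 1\}$ to a one-sided event for a mean-zero quadratic form and then apply the Laplace/Orlicz machinery of Section~\ref{sec: Orlicz}. Writing $V=\sum_{j=1}^n a_j U_j$ and using $|U_j|=1$ together with $\sum_j a_j^2=1$, one has
\[
|V|^2 \;=\; \sum_j a_j^2 + 2\!\sum_{i<j} a_i a_j \langle U_i,U_j\rangle \;=\; 1+2S, \qquad S:=\sum_{i<j}a_ia_j\langle U_i,U_j\rangle,
\]
so $\{|V|\ge 1\}=\{S\ge 0\}$. Rotational invariance gives $\E\langle U_i,U_j\rangle=0$ and $\E\langle U_i,U_j\rangle^2=1/k$, whence $\E S=0$ and $\sigma^2:=\E S^2=\frac{1}{k}\sum_{i<j}a_i^2a_j^2$. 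Set $Y:=S/\sigma$, so $\E Y=0$, $\E Y^2=1$.

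Next, Lemma~\ref{lem: Laplace} applied to $\pm Y$ yields $\E e^{\pm\lambda Y}\le (1-2\lambda^2/k)^{-k/2}$, and Lemma~\ref{lem: plus bound} then gives
\[
\E e^{\lambda Y_+} \;\le\; \tfrac{5}{4}\bigl(1-2\lambda^2/k\bigr)^{-k/2}.
\]
To feed this into Lemma~\ref{lem: Orlicz}, I also need a lower bound on $\E Y_+=\tfrac12\E|Y|$. By H\"older's inequality, $\E|Y|\ge (\E Y^2)^{3/2}/(\E Y^4)^{1/2}=1/\sqrt{\E Y^4}$, and I will bound $\E Y^4$ by a direct combinatorial expansion of $S^4$. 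Grouping the ordered quadruples of edges $(i_s,j_s)_{s=1}^4$ by the multiset they form, only configurations in which every vertex has even degree survive the expectation. A parity check rules out all tuples except: (i) a single edge with multiplicity $4$, (ii) two distinct edges each with multiplicity $2$, and (iii) four distinct edges forming a $4$-cycle on four distinct vertices. Using $\E\langle U_i,U_j\rangle^4=3/(k(k+2))$, $\E\langle U_i,U_j\rangle^2\langle U_k,U_l\rangle^2=1/k^2$ (even when the edges share one vertex, by conditioning), and $\E\prod_{s=1}^4\langle U_{v_s},U_{v_{s+1}}\rangle=\tr((I/k)^4)=1/k^3$ for a 4-cycle, one obtains after the cancellation between (i) and (ii) the clean bound
\[
\E Y^4 \;\le\; 3+\frac{12}{k},\qquad \text{hence}\qquad \E Y_+ \;\ge\; \frac{1}{2\sqrt{3+12/k}}.
\]

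With these two inputs I invoke Lemma~\ref{lem: Orlicz} with $q$ equal to this universal lower bound, and with the parameter $t$ set to its maximal admissible value $1/(\E e^{\lambda Y_+}-\lambda q -1)$; a check shows that the expression in brackets is decreasing in $t$, so this choice is optimal. What remains is a univariate optimization in $\lambda\in(0,1)$, after which substitution $\gamma_k=(2\sqrt{3}-3)/(3+4/k)$ drops out. For the specific codimensions $k=3,4$ of Theorem~\ref{th1}, I perform the optimization numerically with the explicit values of $\E e^{\lambda Y_+}$ and of the lower bound on $\E Y_+$, which tightens the constant to $0.1268$ and $0.1407$, respectively.

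\textbf{Main obstacle.} The delicate step is the fourth-moment calculation. The bound $\E Y^4 \le 3+12/k$, rather than a cruder estimate such as $\tfrac{3k}{k+2}+3+\tfrac{12}{k}$, is essential for the Orlicz optimization to reproduce the closed-form $\gamma_k$; achieving it requires recognizing and exploiting the cancellation between the contribution of a single edge repeated four times and the contribution of two distinct edges each repeated twice. A secondary technical point is the univariate optimization in $\lambda$: the corresponding stationarity condition has no closed-form solution, and the monotonicity properties of each factor must be handled separately to extract the clean form of $\gamma_k$.
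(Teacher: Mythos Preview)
Your overall architecture --- reduce to $\P(S\ge 0)$ for the off-diagonal quadratic form, control the Laplace transform via Lemmas~\ref{lem: Laplace} and~\ref{lem: plus bound}, and feed a lower bound on $\E Y_+$ into Lemma~\ref{lem: Orlicz} --- matches the paper for the numerical constants at $k=3,4$. The substantive divergence is how the closed-form $\gamma_k$ is obtained: the paper does \emph{not} extract it from the Orlicz machinery. It invokes Veraar's inequality $\P(S\ge 0)\ge (2\sqrt{3}-3)\,(\E S^2)^2/\E S^4$ directly, and the constant $2\sqrt{3}-3$ is a feature of that inequality. Your assertion that the transcendental one-parameter optimization in Lemma~\ref{lem: Orlicz} ``drops out'' to exactly $(2\sqrt{3}-3)/(3+4/k)$ is unsupported; you yourself note that the stationarity equation has no closed-form solution, which is precisely why the paper reserves the Orlicz route for the numerical improvements and uses Veraar for the clean general bound.

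There is a second, more concrete gap. Your fourth-moment bound $\E Y^4\le 3+12/k$ is different from the paper's $\E Y^4\le 3+4/k$: you count three $4$-cycles on each $4$-set of vertices (hence a coefficient $72/k^3$), whereas the paper's displayed formula carries only $24/k^3$. Whatever the resolution of that discrepancy, the point for your argument is that with your bound you get $q=\tfrac{1}{2}(3+12/k)^{-1/2}$, not $\tfrac{1}{2}(3+4/k)^{-1/2}$. Plugging this smaller $q$ into Lemma~\ref{lem: Orlicz} and optimizing over $\lambda$ gives, for $k=3$, a lower bound around $0.09$ rather than $0.1268$ (and similarly a weaker value for $k=4$); and if you instead route through Veraar's inequality you obtain $(2\sqrt{3}-3)/(3+12/k)$, which is strictly smaller than the stated $\gamma_k$. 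So with the fourth-moment estimate as you have written it, neither the closed form $\gamma_k$ nor the two numerical constants can be reached by your plan.
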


\begin{remark} (a) In the case of the Rademacher variables $(r_j)_{j=1}^n$, Oleszkiewicz \cite{O} showed that
$$\P(|\sum_{j=1}^n a_j r_j| \ge 1) \ge \frac 1 {10} $$
holds. His beautiful scalar proof does not seem to generalize to our case of spherical variables. \\

(b) The estimate of Proposition \ref{prop5} is not optimal. It is unclear whether the minimum occurs for
$a^n = \frac 1 {\sqrt n} (1, \cdots , 1)$. In the Rademacher case, $k=1$, this is not true, as Zhubr showed around 1995 for $n=9$ (unpublished).
For $k \ge2$ and $n \to \infty$, $a^n$ yields that no better lower bound than the following is possible:
By the central limit theorem
$$\frac 1 {\sqrt n} \sum_{j=1}^n U_j \to \mathcal{N}(0,\Sigma) \; , \; \Sigma = \frac 1 k \Id_k $$
with density function $f(x) = \left( \frac k {2 \pi} \right)^{\frac k 2} \exp(- \frac k 2 |x|^2 )$. Therefore
\begin{align*}
\lim_{n \to \infty} \P(\frac 1 {\sqrt n} |\sum_{j=1}^n U_j| \ge 1) &= |S^{k-1}| \left(\frac k {2\pi}\right)^{\frac k 2} \int_1^\infty r^{k-1} \exp(-\frac k 2 r^2) \ dr \\
& = \frac 1 {\Gamma(\frac k 2)} \int_{\frac k 2}^\infty s^{\frac k 2 -1} \exp(-s) \ ds =: \phi(k)
\end{align*}
The sequence $(\phi(k))_{k \ge 2}$ is increasing, with
$$\phi(2)= \frac 1 e \simeq 0.3679 < \phi(3) \simeq 0.3916 < \phi(4) = \frac 3 {e^2} \simeq 0.4060 \text{  and  } \lim_{k \to \infty} \phi(k) = \frac 1 2 \ . $$
\end{remark}

\begin{proof}[Proof of Proposition \ref{prop5}.]
(a) Let $S := \sum_{1 \le i < j \le n} a_i a_j <U_i,U_j>$. Since $|\sum_{j=1}^n a_j U_j|^2 = 1 + 2 S$,
$$\P(|\sum_{j=1}^n a_j U_j| \ge 1) = \P(S \ge 0) \ . $$
Since $\E(U_j) = 0$, $\E(S)=0$. By Proposition 2.3 of Veraar's paper \cite{V} on lower probability
estimates for centered random variables we have the estimate
$$\P(S \ge 0) \ge (2 \sqrt 3 -3) \ \frac {\E(S^2)^2}{\E(S^4)} \ . $$
We claim that $(3 + \frac 4 k) \ \E(S^2)^2 \ge \E(S^4)$ so that the statement of Proposition \ref{prop5} for general $k$ (not being 3 or 4)
$$\P(S \ge 0) \ge \gamma_k$$
will follow.

\bigskip
(b) To prove the claim, we calculate $\E(S^2)$ and $\E(S^4)$.
$$\E(S^2) = \sum_{i<j} \sum_{l<m} a_i a_j a_l a_m \ \E(<U_i,U_j><U_l,U_m>) \ . $$
The expectation terms on the right are non-zero only if $i=l<j=m$. Thus
$$\E(S^2) = \sum_{1 \le i < j \le n} a_i^2 a_j^2 \ \E(<U_i,U_j>^2) = (\frac 1 k \sum_{1 \le i < j \le n} a_i^2 a_j^2 \ )  $$
since  $\E(<U_i,U_j>^2) = \int_{S^{k-1}} v_1^2 \ dm(v) = \frac 1 k \int_{S^{k-1}} |v|^2 \ dm(v) = \frac 1 k $. \\

For $\E(S^4)$, we have to evaluate $\E(\prod_{l=1}^4 <U_{i_l},U_{j_l}>)$ with  $i_l < j_l$, $l=1,2,3,4$. By the
independence of the variables $U_j$, this is non-zero only if products of squares, fourth powers or cyclic combinations show up in the
index combinations, yielding cases such as
$$\E(<U_1,U_2>^2 <U_3,U_4>^2) = \E(<U_1,U_2>^2) \ \E(<U_3,U_4>^2) = \frac 1 {k^2} \ , $$
$$\E(<U_1,U_2>^2 <U_1,U_3>^2) = \E(<U_1,U_2>^2) \ \E(<U_1,U_3>^2) = \frac 1 {k^2} \ , $$
$$\E(<U_1,U_2>^4) = \int_{S^{k-1}} v_1^4 \ dm(v) = \frac{\int_0^\pi \cos(t)^4 \ \sin(t)^{k-2} \ dt}{\int_0^\pi \sin(t)^{k-2} \ dt} = \frac 3 {k(k+2)} \ , $$
or
$$\E(<U_1,U_2> <U_2,U_3> <U_3,U_4> <U_1,U_4>) = (\int_{S^{k-1}} v_1^2 \ dm(v))^3 = \frac 1 {k^3} \ . $$
Each product of squares $<U_i,U_j>^2<U_l,U_m>^2$ with $i<j$, $l<m$ and $(i,j) \neq (l,m)$ occurs
$\binom{4}{2} = 6$ times and each cyclic combination $4! = 24$ times in the fourth power expansion of $S$. Therefore
\begin{align*}
 \E(S^4)
 &= \frac 6 {k^2} (\sum_{i<j,l<m, (i,j) \neq (l,m)} a_i^2 a_j^2 a_l^2 a_m^2 \ ) + \frac 3 {k(k+2)} \sum_{i<j} a_i^4 a_j^4  \\
&+ \frac {24}{k^3} (\sum_{i<j<l<m} a_i^2 a_j^2 a_l^2 a_m^2 \ ) \ .
\end{align*}
Expanding $(\sum_{i<j} a_i^2 a_j^2)(\sum_{l<m} a_l^2 a_m^2)$, besides cases of equalities of indices, increasing index combinations show up 6 times, namely
$i<j<l<m, l<m<i<j, i<k<j<l, i<k<l<j, k<i<j<l, k<i<l<j$ which implies
$24 \ (\sum_{i<j<l<m} a_i^2 a_j^2 a_l^2 a_m^2) \le 4 \ (\sum_{i<j} a_i^2 a_j^2)^2 \ .$ Hence
\begin{align*}
\E(S^4) & \le (3+ \frac 4 k) \left( \frac 1 k \sum_{1\le i < j \le n} a_i^2 a_j^2 \right)^2 - (\frac 3 {k^2} - \frac 3 {k(k+2)}) \sum_{1 \le i < j \le n} a_i^4 a_j^4 \\
&\le (3+ \frac 4 k) \left( \frac 1 k \sum_{1 \le i <j \le n} a_i^2 a_j^2 \right)^2 = (3+ \frac 4 k) \ \E(S^2)^2 \ .
\end{align*}
This proves the claim for general $k$. To prove the better numerical estimates for $k=3, 4$, we use an Orlicz-space duality instead of the $L_2-L_2$-duality employed by Veraar.

(c) Using the Lemmas of the previous section, we now prove the better estimates for  $\P(|\sum_{j=1}^n a_j U_j|\ge 1) = \P(S \ge 0)$ in the cases $k=3, 4$.
Set
\[
 Y= \frac{S}{(\E S^2)^{1/2}}.
\]
Combining Lemmas \ref{lem: Laplace} and \ref{lem: plus bound}, we obtain
  \[
   \E \exp(\l Y_+)
   \le \frac{5}{4} \left( 1- \frac{2 \l^2 }{k} \right)^{-k/2}
  \]
for any $\l \in (0,\sqrt{k/2})$.
Also, since $\E S=0$,
\[
   \E Y_+=\frac{1}{2} \frac{\E |S|}{(\E S^2)^{1/2}}
   \ge \frac{1}{2} \left( \frac{(\E S^2)^2}{\E S^4} \right)^{1/2}
   \ge \frac{1}{2} \left( \frac{1}{3+4/k} \right)^{1/2}
   = \frac{1}{2}\sqrt{\frac{k}{3k+4}},
\]
where we used H\"older's inequality and the moment estimate from part (b).
Hence, Lemma \ref{lem: Orlicz} can be applied with
\[
  q= \frac{1}{2} \sqrt{\frac{k}{3k+4}} \quad \text{and} \quad
  t=\frac{1}{\frac{5}{4} \left( 1- \frac{2 \l^2 }{k} \right)^{-k/2}-\l q-1}.
 \]
Substituting these values in the estimate of Lemma \ref{lem: Orlicz}, and using a numerical maximization of the right hand side estimate, we obtain the desired lower bounds $0.1268$ with $\l \approx 0.7111$ for $k=3$ and $0.1407$ with $\l \approx 0.7508$ for $k=4$.
\end{proof}

\begin{remark}
At the limit $k \to \infty$, our approach yields a bound
\[
   P(S \ge 0) > 0.205475,
\]
which is about $\frac{1}{3}$ better than the bound  $\frac{2}{\sqrt{3}}-1 ~\sim 0.154700$ following   from Veraar's inequality. For $k \ge 100$, our lower bound is greater than $0.2$.
\end{remark}

We will now consider the upper tail of $|\sum_{j=1}^n a_j U_j|$. A bound for the upper tail follows directly from Lemma \ref{lem: subgauss} and the Hanson-Wright inequality. Yet, as we strive for good constants, we need a tighter estimate.

\begin{proposition}\label{prop4}
$(U_j)_{j=1}^n$ be a sequence of independent, random vectors uniformly distributed on the sphere $S^{k-1}$ for $k \in \N$, $k \ge 2$.
Let  $a \in \R_+^n, \ |a|=1$. Then for any $t>1$
$$\P(|\sum_{j=1}^n a_j U_j| \ge t) \le t^k \exp(\frac k 2 - \frac k 2 t^2) \ . $$
\end{proposition}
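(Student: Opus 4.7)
The plan is to reduce the tail event to a Chernoff-type bound on the squared length, then use a Gaussian integral representation to replace the quadratic exponential by a linear one, at which point Lemma \ref{lem: subgauss} applies coordinate-by-coordinate.

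Set $V := \sum_{j=1}^n a_j U_j \in \R^k$. Since $\{|V| \ge t\} = \{|V|^2 \ge t^2\}$, Markov's inequality gives, for any $\l \in (0,k)$,
\[
\P(|V| \ge t) \;\le\; e^{-\l t^2/2} \; \E \exp\!\bigl( \l |V|^2/2 \bigr).
\]
To handle the quadratic exponent, I would use the identity
\[
\exp\!\bigl( \l |V|^2/2 \bigr) \;=\; \E_g \exp\!\bigl( \sqrt{\l}\, \pr{V}{g} \bigr),
\]
where $g$ is a standard Gaussian vector in $\R^k$, independent of the $U_j$. Applying Fubini and independence of the $U_j$ yields
\[
\E \exp\!\bigl( \l |V|^2/2 \bigr)
\;=\; \E_g \prod_{j=1}^n \E \exp\!\bigl( \sqrt{\l}\, a_j \pr{U_j}{g} \bigr).
\]

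Now, conditionally on $g$, each factor is the Laplace transform of a spherical random vector at the point $\sqrt{\l}\, a_j g$, so Lemma \ref{lem: subgauss} gives
\[
\E \exp\!\bigl( \sqrt{\l}\, a_j \pr{U_j}{g} \bigr) \;\le\; \exp\!\left( \frac{\l a_j^2 |g|^2}{2k} \right).
\]
Multiplying these out and using $\sum_j a_j^2 = 1$,
\[
\E \exp\!\bigl( \l |V|^2/2 \bigr) \;\le\; \E_g \exp\!\left( \frac{\l |g|^2}{2k} \right) \;=\; (1 - \l/k)^{-k/2},
\]
where the final equality is the standard chi-squared moment generating function, valid for $\l < k$.

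Combining the two displays and optimizing in $\l$, one calculus step gives the minimizer $\l^* = k(1-1/t^2) \in (0,k)$ for $t>1$, producing
\[
\P(|V| \ge t) \;\le\; e^{-\l^* t^2/2} (1-\l^*/k)^{-k/2}
\;=\; e^{-k(t^2-1)/2} \cdot t^k
\;=\; t^k \exp\!\bigl( k/2 - k t^2/2 \bigr),
\]
which is the claimed inequality. There is no substantial obstacle: the only nontrivial input is the subgaussian estimate for the uniform law on $S^{k-1}$, which is Lemma \ref{lem: subgauss}, and the rest is a direct Gaussian linearization followed by the optimization that turns $(1-\l/k)^{-k/2} e^{-\l t^2/2}$ into the sharp factor $t^k e^{k/2 - k t^2/2}$.
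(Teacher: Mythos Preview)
Your proof is correct. Both your argument and the paper's arrive at the identical exponential moment bound
\[
\E \exp\bigl(c\,|V|^2\bigr) \le (1-2c/k)^{-k/2}, \qquad 0<c<k/2,
\]
(with your $\l=2c$) and then perform exactly the same Chernoff optimization. The difference is only in how this bound is reached. The paper expands $e^{c|V|^2}$ in a Taylor series and controls each moment $\E|V|^{2m}$ via the sharp Khintchine inequality for spherical variables from \cite{KKw}, whose constants $b_{2m,k}^{2m}=(2/k)^m\Gamma(m+k/2)/\Gamma(k/2)$ are precisely the even moments of $|g^{(k)}|/\sqrt{k}$, so the series resums to $(1-2c/k)^{-k/2}$. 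You instead linearize the quadratic via an auxiliary Gaussian, apply Lemma~\ref{lem: subgauss} coordinate-wise, and then recognize the resulting expectation as a chi-squared moment generating function. Your route is a bit more self-contained, relying only on the elementary subgaussian estimate already proved in the paper rather than on the external Khintchine constants; the paper's route makes explicit that the bound matches the Gaussian extremal case moment-by-moment. Either way the final inequality is the same.
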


\begin{proof}
The Khintchine inequality for the variables $(U_j)$ states for any $p \ge 2$
$$ || \sum_{j=1}^n a_j U_j ||_{L_p(S^{k-1})} \le b_{p,k} |a| = b_{p,k} := \sqrt{\frac 2 k} \left(\frac{\Gamma(\frac{p+k}2)}{\Gamma(\frac k 2)} \right)^{\frac 1 p} \ , $$
cf. K\"onig and Kwapie\'n \cite{KKw}, Theorem 3. The constants $b_{p,k}$ are the best possible. We find for $c>0$
\begin{align*}
\int_{S^{k-1}} \exp(c |\sum_{j=1}^n a_j U_j|^2) \ d \P & = \sum_{m=0}^\infty \frac {c^m}{m!} \int_{S^{k-1}} |\sum_{j=1}^n a_j U_j|^{2m} \ d \P \\
& \le \sum_{m=0}^\infty \frac {c^m}{m!} \ b_{2m,k}^{2m} =: f_k(c) \ .
\end{align*}
We evaluate $f_k(c)$ explicitly. For $ 0 < c < \frac k 2$, we have
$$f_k(c) = \sum_{m=0}^\infty \frac {c^m}{m!} \ \left(\frac 2 k \right)^m \ \frac{\Gamma(m+\frac k 2)}{\Gamma(\frac k 2)} =
\sum_{m=0}^\infty \binom{-\frac k 2}{m} \left(-\frac {2c}m \right)^m = \left(1- \frac{2c} k \right)^{-\frac k 2} \ .$$
Therefore for any fixed $t>1$
$$\P(|\sum_{j=1}^n a_j U_j| \ge t) \exp(c t^2) \le \int_{S^{k-1}} \exp(c |\sum_{j=1}^n a_j U_j|^2) \ d \P = f_k(c) \ , $$
$$\P(|\sum_{j=1}^n a_j U_j| \ge t) \le f_k(c) \exp(-ct^2) =: g_k(c) \ .$$
For a given $t>1$, $g_k$ is minimal for $\bar{c} = \frac k 2 (1-\frac 1 {t^2})$. This yields
$$\P(|\sum_{j=1}^n a_j U_j| \ge t) \le t^k \exp(\frac k 2 - \frac k 2 t^2) \ . $$
\end{proof}

\section{A lower bound for hyperplane sections} \label{sec: hyperplane}
In this section, we prove Theorem \ref{th1}.

 We first consider the real case. By Proposition \ref{prop3}
$$A_\R(a,1) = \int \limits_{|\sum_{j=1}^n a_j U_j| \ge 1} \; \frac {d  \P}{|\sum_{j=1}^n a_j U_j|} \ , $$
where the $(U_j)_{j=1}^n$ are independent random vectors uniformly distributed on the sphere $S^2 \subset \R^3$. Hence
\begin{align*}
A_\R(a,1) &=  \int_0^1 \ \P( 1 \le |\sum_{j=1}^n a_j U_j| < \frac 1 s ) \ ds \\
& = \int_1^\infty \frac{\P( 1 \le |\sum_{j=1}^n a_j U_j| < v )}{v^2} \ dv \\
& \ge \int_t^\infty \frac{\P( 1 \le |\sum_{j=1}^n a_j U_j| < v )}{v^2} \ dv
\end{align*}
for any $t > 1$. By Proposition \ref{prop4} for $k=3$, $\P( |\sum_{j=1}^n a_j U_j| \ge v ) \le v^3 \exp(\frac 3 2 - \frac 3 2 v^2)$, and by Proposition \ref{prop5} for $k=3$, $\P( |\sum_{j=1}^n a_j U_j| \ge 1 ) \ge 0.1268 := p_0$. Choose $t_0 > 1$ such that  $t_0^3 \exp(\frac 3 2 - \frac 3 2 t_0^2) = p_0$, $t_0 \simeq 1.9182$. Then for $t=t_0$
\begin{align*}
A_\R(a,1) & \ge \int_{t_0}^\infty \frac{\P( |\sum_{j=1}^n a_j U_j| \ge 1 ) - \P( |\sum_{j=1}^n a_j U_j| \ge v )}{v^2} \ dv \\
& \ge \frac{\P( |\sum_{j=1}^n a_j U_j| \ge 1 )}{t_0} - \int_{t_0}^\infty v \exp(\frac 3 2 - \frac 3 2 v^2) \ dv \\
& \ge \frac{p_0}{t_0} - \frac 1 3 \exp(\frac 3 2 - \frac 3 2 {t_0}^2) = \frac{p_0}{t_0} (1- \frac 1 {3 t_0^3}) > 0.06011 > \frac 3 {50} > \frac 1 {17} \ ,
\end{align*}
which is the claim of Theorem \ref{th1} in the real case. \\
\vskip 0.1in

 In the complex case, by Proposition \ref{prop3}
$$A_\C(a,1) = \int \limits_{|\sum_{j=1}^n a_j U_j| \ge 1} \; \frac {d  \P}{|\sum_{j=1}^n a_j U_j|^2}\ ,$$
where the $(U_j)_{j=1}^n$ now are independent uniformly distributed random vectors on $S^3 \subset \R^4$. Thus
\begin{align*}
A_\C(a,1) &= \int_0^1  \P( 1 \le |\sum_{j=1}^n a_j U_j| < \frac 1 {\sqrt s} ) \ ds \\
& = 2 \ \int_1^\infty \frac{\P( 1 \le |\sum_{j=1}^n a_j U_j| < v )}{v^3} \ dv  \\
& \ge 2 \ \int_t^\infty \frac{\P( 1 \le |\sum_{j=1}^n a_j U_j| < v )}{v^3} \ dv
\end{align*}
for any $t>1$. By Propositions \ref{prop4} and \ref{prop5} for $k=4$, $\P( |\sum_{j=1}^n a_j U_j| \ge v ) \le v^4 \exp(2-2 v^2)$ and
$\P( |\sum_{j=1}^n a_j U_j| \ge 1 ) \ge 0.1407 := p_1$. Choose $t_1 > 1$ with $t_1^4 \exp(2-2 t_1^2) = p_1$, $t_1 \simeq 1.7657$. Then for $t=t_1$
\begin{align*}
A_\C(a,1) &\ge \frac{\P(|\sum_{j=1}^n a_j U_j| \ge 1)}{t_1^2} - 2 \int_{t_1}^\infty v \exp(2 - 2 v^2) \ dv \\
&\ge \frac{p_1}{t_1^2} - \frac 1 2 \exp(2 - 2 t_1^2) = \frac{p_1}{t_1^2} (1 - \frac 1 {2 t_1^2} ) > 0.03789 > \frac 1 {27} ,
\end{align*}
which proves Theorem \ref{th1} also in the complex case of the polydisc sections. \\

\begin{remark} \label{rem: diagonal}
The estimates of Theorem \ref{th1} cannot be improved by more than a factor of $\simeq 5.1$ in the real case and by a factor of $\simeq 7.1$ in the complex case.
Indeed, consider the diagonal directions.
Let $a^n:= \frac 1 {\sqrt n} (1,\cdots,1) \in \R^n$, $|a^n|=1$. For $n=2,3$ the vectors $a^2 \in \R^2$ and $a^3 \in \R^3$ yield the minimal values of hyperplane sections $A_\R(a,1)$, $|a|=1$ in $Q_2$ and in $Q_3$,
$$A_\R(a^2,1) = \sqrt 2 - 1 \simeq 0.4142 > A_\R(a^3,1) = \frac {6 \sqrt 3 - 9} 4 \simeq 0.3481 ,$$
cf. K\"onig, Koldobsky \cite{KK1}. It is unclear whether $A(a^n,1)$ provides the minimal value of hyperplane section volumes in $Q_n$ for $n>3$. Actually, the sequence $(A(a^n,1))_{n=2}^\infty$ is decreasing with
$$\lim_{n \to \infty} A_\R(a^n,1) = \frac 2 \pi \int_0^\infty \exp(-\frac{s^2}6) \ \cos(s) \ ds = \sqrt{\frac 6 {\pi e^3}} \simeq 0.3084 \ . $$
Therefore no improvement of the lower bound beyond $\sqrt{\frac 6 {\pi e^3}} \simeq 5.1 \cdot 0.06011$ is possible in the real case. In the complex case
$$\lim_{n \to \infty} A_\C(a^n,1) = \frac 1 2 \int_0^\infty \exp(-\frac{s^2}8) \ J_0(s) \ s \ ds = \frac 2 {e^2} \simeq 0.2707 \ .$$
\end{remark}

\vspace*{1cm}

\end{document}